\newtheorem{theorem}{Theorem}
\newtheorem{proposition}[theorem]{Proposition}
\newtheorem{corollary}[theorem]{Corollary}
\newtheorem{lemma}[theorem]{Lemma}
\numberwithin{theorem}{section}
\theoremstyle{definition}
\newtheorem{definition}[theorem]{Definition}
\theoremstyle{remark}
\newtheorem{remark}[theorem]{Remark}
\newcommand{\mR}{\mathbb{R}}
\newcommand{\mC}{\mathbb{C}}
\newcommand{\mN}{\mathbb{N}}
\newcommand{\mE}{\mathbb{E}}
\newcommand{\mS}{\mathbb{S}}
\newcommand{\cM}{\mathcal{M}}
\newcommand{\cP}{\mathcal{P}}
\newcommand{\cC}{\mathcal{C}}
\newcommand{\cR}{\mathcal{R}}
\newcommand{\cK}{\mathcal{K}}
\newcommand{\cH}{\mathcal{H}}
\newcommand{\ux}{\underline{x}}
\newcommand{\uy}{\underline{y}}
\newcommand{\uu}{\underline{u}}
\newcommand{\uv}{\underline{v}}
\newcommand{\uz}{\underline{z}}
\newcommand{\us}{\underline{s}}
\newcommand{\ut}{\underline{t}}
\newcommand{\uom}{\underline{\omega}}
\newcommand{\uta}{\underline{\tau}}
\newcommand{\utd}{\underline{\tau}^{\dagger}}
\newcommand{\upx}{\partial_{\underline{x}}}
\newcommand{\upy}{\partial_{\underline{y}}}
\newcommand{\upz}{\partial_{\underline{z}}}
\newcommand{\pJz}{\partial_{z_j}}
\def\a{{\alpha}} 
\def\b{{\beta}}
\numberwithin{equation}{section}
\begin{document}
 
\title[Inversions for the Hua-Radon and polarized Hua-Radon transform]{Inversions for the Hua-Radon and the polarized Hua-Radon transform}

\author{Teppo Mertens}
\address{Department of Electronics and Information Systems \\Faculty of Engineering and Architecture\\Ghent University\\Krijgslaan 281, 9000 Gent\\ Belgium.}
\email{Teppo.Mertens@UGent.be}

\author{Frank Sommen}
\address{Department of Electronics and Information Systems \\Faculty of Engineering and Architecture\\Ghent University\\Krijgslaan 281, 9000 Gent\\ Belgium.}
\email{Franciscus.Sommen@UGent.be}

\date{\today}
\keywords{Holomorphic functions, Monogenic functions, Lie ball, Lie sphere, Radon-type transforms.}
\subjclass[2010]{32A50, 30G35, 44A12.} 

\begin{abstract}
The Hua-Radon and polarized Hua-Radon transform are two orthogonal projections defined on holomorphic functions in the Lie sphere. Both transformations can be written as integral transforms with respect to a suitable reproducing kernel. Integrating both kernels over a Stiefel manifold yields a linear combination of zonal spherical monogenics. Using an Almansi type decomposition of holomorphic functions and reproducing properties of the zonal monogenics, we obtain an inversion formula for both the Hua-Radon and the polarized Hua-Radon transform.
\end{abstract}

\maketitle

\tableofcontents

\section{Introduction}
\setcounter{equation}{0}
The Szeg\H o-Radon transform was defined by Sabadini and Sommen in \cite{Szego} as an orthogonal projection of left monogenic functions onto a subspace $\mathcal{ML}^2(\uta)$. The Szeg\H o-Radon transform is a variant of the Clifford Radon transform which was established in \cite{Quaternionic and Radon, Radon to Clifford, Radon and X-ray, Clifford and integral}. Using techniques of Clifford analysis, a reproducing kernel for the basis of $\mathcal{ML}^2(\uta)$ was obtained. This kernel was then used to write the Szeg\H o-Radon transform as an integral transform over the $m$-dimensional unit sphere $\mS^{m-1}$. Finally, the dual Radon transform was defined as the integral over a Stiefel manifold of a function depending on two orthogonal unit vectors. This dual transform was the key to invert the Szeg\H o-Radon transform, since applying it termwise to the kernel of the Szeg\H o-Radon transform yields a scalar multiple of the zonal spherical monogenics. This scalar coefficient was reformulated as an operator using the Gamma operator $\Gamma_{\ux}$. This has lead to an inversion formula for the Szeg\H o-Radon transform.

Sabadini and Sommen recently extended the concept of the Szeg\H o-Radon transform to the Lie sphere \cite{radonlie}. This was motivated by the fact that monogenic functions admit a holomorphic extension in the Lie ball, see e.g. \cite{Hua, Mor,holomorphic}. Multiple kinds of mutually interrelated Radon type transforms were defined. These transforms were also reformulated as an integral transform with respect to a certain kernel, but this time the Lie sphere was considered as the integration domain.

A crucial development which has not yet appeared in the literature on these Radon type transforms is the description of their inversion. In this paper we will outline how inversion formulas can be obtained for two important Radon type transforms, namely the Hua–Radon and the polarized Hua–Radon transform. A similar analysis on yet another transform named after Hua, namely the monogenic Hua–Radon transform, was conducted by the authors in \cite{monogenic Hua}. We will use the techniques of \cite{Szego, monogenic Hua} also in the present setting.

We start our exposition with some preliminaries on Clifford analysis. In Section \ref{Section::Hua} we define Hua-Radon transform as an orthogonal projection onto a subspace of holomorphic functions over the Lie sphere. This orthogonal projection is then written as an integral transform with respect to a reproducing kernel $\cK_{\uta}$. In Proposition \ref{unicity harmonic}, we show that the zonal spherical harmonics $K_{m,k}(\ux,\uy)$ are spin-invariant, $k$-homogeneous and harmonic polynomials with respect to both $\ux$ and $\uy$ and moreover they are unique with this property up to a scalar multiple. Since each of the basis elements of $\mathcal{OL}^2(\uta)$ is a null-solution of some power of the complexified Laplace operator $\Delta_{\uz} = \sum_{j=1}^m \pJz^2$, we first show that the dual Radon transform applied to each of the terms of $\cK_{\uta}$ is a linear combination of the zonal spherical harmonics. Decomposing the zonal spherical harmonics in terms of the zonal spherical monogenics, we obtain an inversion formula for holomorphic functions over the Lie sphere in Theorem \ref{inversion Hua-Radon}. Quintessentially in this respect is the fact that holomorphic functions admit an Almansi decomposition \cite{Aros Almansi, Balk Almansi, Richard Almansi, Almansi, Lie}.

In Section \ref{section::polarized} we recall the definition of the polarized Hua-Radon transform, which is an orthogonal projection onto a subspace spanned by null-solutions of a power of the Dirac operator $\upz = \sum_{j=1}^{m}e_j\pJz$. Writing this projection as an integral transform with kernel $L_{\uta}$ and using the results of Section \ref{Section::Hua}, we can formulate the dual transform of each of the terms of $L_{\uta}$ as a linear combination of the zonal spherical monogenics $\cC_{m,k}(\ux,\uy)$. Finally, an inversion formula is determined in Theorem \ref{inversion polarized}, relying again on the Almansi decomposition for holomorphic functions.

\section{Preliminaries}
\setcounter{equation}{0}

In this section we introduce all notations and preliminary results that will be useful for the paper. We mostly follow the notations from \cite{radonlie}.

\subsection{Clifford algebras}

Let $\mR^{m}$ denote the real vector space with basis $(e_1, e_2, \ldots, e_{m})$. Throughout the paper we will assume that $m\geq 3$. We define the real Clifford algebra $\mR_{m}$ as the real algebra generated by the basis elements $e_1, e_2, \ldots, e_{m}$ which satisfy the following relations
\begin{align}
e_j^2 &= -1, \qquad j \in \{1, \ldots, m\}, \label{rel1}\\
e_j e_k + e_k e_j &=0, \phantom{-}\qquad j \neq k.\label{rel2}
\end{align}
The complex Clifford algebra $\mC_m$ is the complex algebra generated by $e_1, e_2, \ldots, e_{m}$ which satisfy (\ref{rel1}) and (\ref{rel2}).
Any element $\a\in\mR_m$ (respectively $\a\in\mC_m$), can be written as

\[
\a = \sum_{A\subset\{1,\ldots,m\}} \alpha_A e_A
\]
with $\a_A\in\mR$ (or $\a_A\in\mC$), $A = \{i_1,\ldots,i_{\ell}\}$ is a multi-index for which $i_1<\ldots <i_{\ell}$, such that $e_A = e_{i_1}\ldots e_{i_{\ell}}$ and $e_{\emptyset} = 1$. A 1-vector is a linear combination of only basis vectors $e_j$. We will denote the $k$-vector part of a real or complex Clifford element $\alpha$ by $[\alpha]_k$, i.e.

\[
[\alpha]_k = \sum_{\substack{A\subset\{1,\ldots,m\}\\|A|=k}} \alpha_{A} e_A
\]
We will be using the Hermitian conjugation, which is an automorphism on $\mC_m$ defined for $\a,\b\in\mC_m$ as

\begin{align*}
(\a\b)^{\dagger} &= \b^{\dagger}\a^{\dagger},\\
(\a + \b)^{\dagger} &= \a^{\dagger} + \b^{\dagger},\\
(\a_A e_A)^{\dagger} &= \overline{\a_A} e_A^{\dagger},\\
e_j^{\dagger}&=-e_j \qquad j\in\{1,\ldots,m\},
\end{align*}
where $\overline{\a_A}$ is the complex conjugate of $\a_A\in\mC$. Note that for $A = \{i_1,\ldots,i_k\}\subset \{1,\ldots,m\}$, we have
\begin{align}
e_A^{\dagger} e_A = (-1)^{k} e_{i_k}\ldots e_{i_1} e_{i_1}\ldots e_{i_k} = (-1)^{2k} = 1
\end{align}
and thus we have for any Clifford element $\alpha = \sum_{A\subset\{1,\ldots,m\}} \alpha_A e_A$ 

\begin{equation}\label{eq::0-part of fdag f}
\left[\alpha^{\dagger} \alpha\right]_0 = \sum_{A\subset\{1,\ldots,m\}} \overline{\alpha_A}\alpha_A.
\end{equation}

\subsection{Clifford analysis}

The scalar product of two 1-vectors $\uu$ and $\uv$ is given by $\langle \uu,\uv\rangle = \sum_{j=1}^m u_jv_j$ and the wedge-product by $\uu\wedge\uv = \sum_{i<j}(v_iw_j-v_jw_i)e_{i}e_{j}$. It is straightforward that
\begin{equation}\label{product vectors}
\uu\phantom{.}\uv = -\langle\uu,\uv\rangle + \uu\wedge\uv.
\end{equation}
Using equation (\ref{product vectors}) we see that $\langle \uu,\uv\rangle = -\frac{1}{2}(\uu\uv + \uv\uu)$ and $\uu\wedge\uv = \frac{1}{2}(\uu\uv - \uv\uu)$. Consequently if $\uu$ and $\uv$ are perpendicular, i.e. $\langle \uu,\uv\rangle = 0$, then $\uu\phantom{.}\uv = -\uv\phantom{.}\uu$.

The norm of a real or complex 1-vector $\uu$ is defined as
\[
|\uu|^2 = \sum_{j=1}^m u_j^2 = -\uu^2.
\]
If $\uu$ is a real vector, we can use $|\uu| = \sqrt{\sum_{j=1}^m u_j^2}$. If on the other hand $\uu$ is a complex vector, we use $|\uu|^2 = -\uu^2 = \langle \uu,\uu\rangle$, i.e. the square of the complexified version of the real norm.

The following result, proven in \cite{Szego}, will be useful.

\begin{lemma}\label{lemma::tau}
Let $\ut,\us\in\mS^{m-1}$ be such that $\langle\ut,\us\rangle = 0$ and let $\uta = \ut+i\us\in\mC^m$. Then $\utd = -\ut + i\us$ and
\begin{enumerate}[(i)]
\item
$\uta\utd\uta = 4\uta$,
\item
$\uta^2 = (\utd)^2 = 0$,
\item
$\uta\utd + \utd\uta = 4$.
\end{enumerate}
\end{lemma}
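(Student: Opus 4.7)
The plan is to compute $\utd$ directly from its definition and then verify (ii) and (iii) by straightforward multiplication, reducing (i) to a one-line consequence of the other two.

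First I would unpack the definition of Hermitian conjugation for the $1$-vector $\uta = \sum_{j=1}^m (t_j + i s_j) e_j$. Applying the rules $(\a_A e_A)^\dagger = \overline{\a_A} e_A^\dagger$ and $e_j^\dagger = -e_j$ termwise, each coefficient becomes $t_j - i s_j$ and each basis vector is negated, so the coefficient of $e_j$ in $\utd$ is $-t_j + i s_j$. This immediately gives $\utd = -\ut + i\us$.

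Next I would verify (ii). Expanding $\uta^2 = (\ut + i\us)^2 = \ut^2 + i(\ut\us + \us\ut) + i^2 \us^2$ and using the two standing hypotheses, namely $\ut,\us \in \mS^{m-1}$ (so that $\ut^2 = \us^2 = -1$) and $\langle \ut,\us\rangle = 0$ (so that $\ut\us + \us\ut = -2\langle\ut,\us\rangle = 0$ via (\ref{product vectors})), all three summands collapse and give $\uta^2 = -1 + 0 - (-1) = 0$. The same expansion for $\utd = -\ut + i\us$ yields $(\utd)^2 = 0$ by the same cancellations. For (iii) I would compute both products $\uta\utd = (\ut + i\us)(-\ut + i\us)$ and $\utd\uta = (-\ut + i\us)(\ut + i\us)$ individually; each equals $-\ut^2 + i^2\us^2 \pm i(\ut\us - \us\ut) = 2 \pm 2i\,\ut\wedge\us$, and the cross terms cancel when added, leaving $\uta\utd + \utd\uta = 4$.

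Finally, (i) follows formally from (ii) and (iii): rewrite $\uta\utd\uta = \uta(\utd\uta) = \uta(4 - \uta\utd) = 4\uta - \uta^2\,\utd = 4\uta$, where the last step uses $\uta^2 = 0$. None of the steps is really an obstacle; the only mild subtlety is being careful with the signs coming from $i^2 = -1$ together with $\ut^2 = -1$ and $\us^2 = -1$, which pairwise cancel and are the reason the isotropy identities $\uta^2 = (\utd)^2 = 0$ hold in the first place.
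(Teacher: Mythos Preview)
Your argument is correct. The paper does not actually prove this lemma but cites it from \cite{Szego}; your direct computation---first determining $\utd$ from the definition of Hermitian conjugation, then verifying (ii) and (iii) by expanding and using $\ut^2=\us^2=-1$ together with $\ut\us+\us\ut=0$, and finally deducing (i) from (ii) and (iii)---is exactly the natural route and matches what one finds in that reference.
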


Let $B(0,1)$ denote the unit ball with center at the origin in $\mR^m$, whereas the unit sphere will be denoted by $\mS^{m-1}$, i.e. $\mS^{m-1} = \left\{\uu\in\mR^m \mid |\uu|^2 = 1\right\}$. The area of the unit sphere is given by
\[
A_m = \frac{2\pi^{m/2}}{\Gamma\left(\frac{m}{2}\right)},
\]
where $\Gamma$ is the gamma function.\\
The standard Dirac operator is given by
\[
\upx = \sum_{j=1}^m e_j\partial_{x_j}.
\]
Using (\ref{rel1}) and (\ref{rel2}), we get that the square of the Dirac operator satisfies $\upx^2= - \Delta_{\ux}$, where $\Delta_{\ux} = \sum\limits_{j=1}^{m}\partial_{x_{j}}^2$ is the Laplace operator. The symbol of the Dirac operator $\upx$ is denoted by the vector variable
\[
\ux= \sum_{j=1}^{m}e_{j}x_j
\]

\begin{definition}
A function $f:\Omega\subset \mR^m \to\mC_m$ which is continuously differentiable in the open set $\Omega$ is called (left) monogenic in $\Omega$ if $f$ is in the kernel of the Dirac operator $\upx$, i.e. $\upx f = \sum_{j=1}^m e_j (\partial_{x_j}f(\ux)) = 0$. The right $\mC_m$-module of (left) monogenic functions in $\Omega$ is denoted by $\cM(\Omega)$.\\
A function $f:\Omega\subset \mR^m \to\mC_m$ which is continuously differentiable in the open set $\Omega$ is called harmonic in $\Omega$ if $f$ is in the kernel of the Laplace operator $\Delta_{\ux} = -\upx^2$. The right $\mC_m$-module of harmonic functions in $\Omega$ is denoted by $\cH(\Omega)$.
\end{definition}
\begin{remark}
Note that there is also the notion of a right monogenic, i.e. a function $f$ for which
\[
f(\ux)\upx = \sum_{j=1}^m (\partial_{x_j} f(\ux))e_j = 0.
\]
\end{remark}
The Gamma-operator $\Gamma_{\ux}$ is given by
\[
\Gamma_{\ux} = -\ux\upx-\mE_{\ux},
\]
where $\mE_{\ux}$ is the Euler operator, which is defined as
\[
\mE_{\ux} = \sum_{j=1}^m x_j\partial_{x_j}.
\]
Let $\cP(\mR^m)$ be the space of polynomials in $m$ variables $x_1,\ldots, x_m$ with coefficients in $\mR$. Using the Euler operator we can define the space $\cP_k(\mR^m)$ of $k$-homogeneous polynomials as
\[
\cP_k(\mR^m) = \{P(\ux)\in\cP(\mR^m) \mid \mE_{\ux} P(\ux) = kP(\ux)\}
\]
which leads to the definition of the space of monogenic polynomials of degree $k$ 
\[
\cM_k(\mR^m) = \left(\cP_k(\mR^m)\otimes \mC_m\right) \cap \cM(\mR^{m})
\]
and the space of spherical harmonics of degree $k$ as

\[
\cH_k(\mR^m) = \left(\cP_k(\mR^m)\otimes \mC_m\right) \cap \cH(\mR^{m}).
\]
We will usually write $\cP_k$, $\cM_k$ and $\cH_k$ for simplicity.\\
\begin{remark}\label{Remark::complexify}
The vectors $\ux$ and $\uy$ are denoting real-valued variables, whereas $\uz$ will denote a complex-valued variable. This also applies to any of the operators defined above. Note that if we use a complex variable $\uz$ we complexify the operator. For example: we work with the complexified version of the Laplace operator
\[
\Delta_{\uz} = \sum_{j=1}^{m} \pJz^2.
\]
If we talk about monogenic or harmonic functions in the complex case, we mean null-solutions of the complexified Dirac operator $\upz$ or the complexified Laplace operator $\Delta_{\uz}$. The space of polynomials of degree $k$ monogenic with respect to $\upz$ is denoted by $\cM_k(\mC^m)$ and the corresponding space of harmonics by $\cH_k(\mC^m)$.
\end{remark}
The space of $k$-homogeneous polynomials $\cP_k$ can be decomposed in terms of these monogenic polynomials and harmonic polynomials in what is known as the harmonic Fischer decomposition (see e.g. \cite{Groen}):

\begin{equation}\label{eq::harmon fisch decomp}
\cP_k(\mR^m)\otimes \mR_m = \bigoplus_{j=0}^{\left\lfloor\frac{k}{2}\right\rfloor} |\ux|^{2j} \cH_{k-2j}.
\end{equation}
Moreover we can decompose $\cH_k$ into $\cH_k = \cM_k \oplus \ux \cM_{k-1}$ (see \cite{Fischer, Groen}).
\begin{proposition}\label{harmonics to monogenics}
Let $H_k(\ux) \in \cH_k$. Then we have $H_k(\ux)= M_k(\ux) + \ux M_{k-1}(\ux)$, with

\begin{align*}
M_{k-1}(\ux) &= -\frac{1}{2k+m-2}\upx H_k(\ux),\\
M_k(\ux)&= \left(1+\frac{1}{2k+m-2}\ux\upx\right) H_k(\ux)
\end{align*}
and $M_{k-1}\in \mathcal{M}_{k-1}$, $M_{k}\in \mathcal{M}_{k}$.
\end{proposition}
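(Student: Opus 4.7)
The plan is to use the decomposition $\cH_k = \cM_k \oplus \ux \cM_{k-1}$ stated just before the proposition to guarantee the existence of a representation $H_k = M_k + \ux M_{k-1}$ with $M_k \in \cM_k$ and $M_{k-1} \in \cM_{k-1}$, and then extract explicit formulas by applying $\upx$ to both sides. The main algebraic tool will be the operator identity
\[
\upx \ux + \ux \upx = -m - 2\mE_{\ux},
\]
which is a standard consequence of the Clifford relations (\ref{rel1}), (\ref{rel2}); I would first record this by a short direct calculation, splitting $\sum_{j,k} e_j e_k$ into the diagonal $j=k$ part (producing $-m$ and $-\mE_{\ux}$ after pairing) and the off-diagonal $j \neq k$ part (which cancels between $\upx \ux$ and $\ux \upx$ upon relabeling indices).

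With the identity in hand, I apply $\upx$ to the sum $H_k = M_k + \ux M_{k-1}$. The first term vanishes since $M_k \in \cM_k$, while for the second term the identity and the homogeneity of $M_{k-1}$ give
\[
\upx(\ux M_{k-1}) = -\bigl(m + 2(k-1)\bigr) M_{k-1} - \ux\upx M_{k-1} = -(2k+m-2)\, M_{k-1},
\]
since $\upx M_{k-1}=0$. Solving yields the first claimed formula $M_{k-1}= -\frac{1}{2k+m-2}\upx H_k$, and substitution into $M_k = H_k - \ux M_{k-1}$ produces the second formula.

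To close the argument I would verify the two expressions indeed belong to the appropriate monogenic spaces, without appealing to the prior existence of the decomposition. Homogeneity is immediate since $\upx$ lowers degree by one. Monogenicity of $M_{k-1}$ reduces to $\upx^2 H_k = -\Delta_{\ux} H_k = 0$ as $H_k$ is harmonic. For $M_k$ I would apply $\upx$ to the formula, noting that $\upx H_k$ is itself a monogenic polynomial of degree $k-1$, so the same identity applied to $f = \upx H_k$ gives $\upx\ux(\upx H_k) = -(2k+m-2)\,\upx H_k$, leading to $\upx M_k = \upx H_k - \upx H_k = 0$.

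No serious obstacle is expected: the entire proof is driven by the single identity $\upx\ux + \ux\upx = -m - 2\mE_{\ux}$ together with careful bookkeeping of degrees of homogeneity. The only place where attention is required is ensuring that the factor $2k+m-2$ is non-zero so that division is legitimate, which is automatic under the standing assumption $m\geq 3$ and $k \geq 0$ (the case $k=0$ being degenerate and interpreted as $M_{-1}=0$, $M_0 = H_0$).
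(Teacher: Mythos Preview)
Your proof is correct. The key identity $\upx\,\ux + \ux\,\upx = -m - 2\mE_{\ux}$ is derived exactly as you outline, the extraction of $M_{k-1}$ by applying $\upx$ to the decomposition is sound, and the direct verification that the two displayed formulas yield monogenic polynomials (using $\upx^2 = -\Delta_{\ux}$ and the same anticommutator identity once more) closes the argument without needing to invoke the abstract splitting $\cH_k = \cM_k \oplus \ux\cM_{k-1}$ at all.

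There is nothing to compare against in the paper itself: Proposition~\ref{harmonics to monogenics} is quoted as a known preliminary result with a reference to \cite{Fischer, Groen} and is not given an in-text proof. Your write-up therefore supplies a self-contained argument where the paper simply cites the literature. The only cosmetic point worth tightening is the opening sentence: since your second half already establishes monogenicity directly, you do not in fact need the existence half of the cited decomposition $\cH_k = \cM_k \oplus \ux\cM_{k-1}$ as a hypothesis, and you could streamline by defining $M_{k-1}$ and $M_k$ via the displayed formulas from the outset and then checking both monogenicity and the identity $H_k = M_k + \ux M_{k-1}$ (the latter being immediate from the definitions).
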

Combining Proposition \ref{harmonics to monogenics} and (\ref{eq::harmon fisch decomp}) leads to the monogenic Fischer decomposition:
\begin{equation}\label{eq::monogenic fisch decomp}
\cP_k(\mR^m)\otimes \mR_m = \bigoplus_{j=0}^{k} \ux^{j} \cM_{k-j}.
\end{equation}

For the decomposition in (\ref{eq::harmon fisch decomp}), there is a projection operator in order to determine each term of the decomposition (see e.g. \cite{projection}):

\begin{proposition}\label{prop projection}
The projection operator of a $k$-homogeneous polynomial onto its harmonic component of degree $k-2\ell$ is given by the following operator

\[
\mbox{Proj}_{k,l}=\sum_{j=0}^{\left\lfloor \frac{k}{2}\right\rfloor-\ell} \alpha_{j,k,\ell} | \ux|^{2j}\Delta_{\ux}^{j+\ell}
\]
where

\[
\alpha_{j,k,\ell} = \frac{(-1)^j(\frac{m}{2}+k-2\ell-1)}{4^{j+l}j!\ell!}\frac{\Gamma(\frac{m}{2}+k-2\ell-j-1)}{\Gamma(\frac{m}{2}+k-\ell)}.
\]
\end{proposition}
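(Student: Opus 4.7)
The plan is to verify the formula by applying $\mbox{Proj}_{k,\ell}$ termwise to the harmonic Fischer decomposition \eqref{eq::harmon fisch decomp}. Writing a generic $P_k \in \cP_k(\mR^m)\otimes\mR_m$ as $P_k(\ux) = \sum_{i=0}^{\lfloor k/2\rfloor} |\ux|^{2i} H_{k-2i}(\ux)$ with $H_{k-2i}\in\cH_{k-2i}$, by linearity it is enough to prove
\[
\mbox{Proj}_{k,\ell}\bigl(|\ux|^{2i} H_{k-2i}\bigr) = \delta_{i,\ell}\,H_{k-2\ell}
\]
for every admissible $i$.

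The analytic engine I would use is the classical iterated identity
\[
\Delta_{\ux}^{n}\bigl(|\ux|^{2s} H_p(\ux)\bigr) = 4^{n}\,\frac{s!}{(s-n)!}\,\frac{\Gamma(s+p+\tfrac{m}{2})}{\Gamma(s-n+p+\tfrac{m}{2})}\,|\ux|^{2(s-n)} H_p(\ux), \qquad s\geq n,
\]
valid for any $H_p\in\cH_p$ (and equal to $0$ when $s<n$), which I would establish by a short induction on $n$ from the one-step formula $\Delta_{\ux}\bigl(|\ux|^{2s} H_p\bigr) = 2s(2s+2p+m-2)|\ux|^{2s-2}H_p$; the latter drops out of the product rule, $\mE_{\ux}H_p = pH_p$ and $\Delta_{\ux}H_p=0$. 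In particular this already shows that $\Delta_{\ux}^{j+\ell}(|\ux|^{2i}H_{k-2i})=0$ whenever $j+\ell>i$, so $\mbox{Proj}_{k,\ell}$ automatically kills the summand when $i<\ell$.

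For $i\geq \ell$, substituting this formula and the explicit $\alpha_{j,k,\ell}$ into $\mbox{Proj}_{k,\ell}(|\ux|^{2i}H_{k-2i})$ allows all $j$-independent factors to be pulled out of the sum. The case $i=\ell$ reduces to the single surviving term $j=0$, where the telescoping $(\tfrac{m}{2}+k-2\ell-1)\Gamma(\tfrac{m}{2}+k-2\ell-1)=\Gamma(\tfrac{m}{2}+k-2\ell)$ makes the coefficient collapse to $1$ and returns $H_{k-2\ell}$. The main obstacle is the remaining case $i>\ell$: setting $n:=i-\ell\geq 1$, I expect the required vanishing to reduce to the combinatorial identity
\[
\sum_{j=0}^{n}(-1)^j\binom{n}{j}\,Q(j) = 0, \qquad Q(j) := \frac{\Gamma(\tfrac{m}{2}+k-2\ell-j-1)}{\Gamma(\tfrac{m}{2}+k-2\ell-n-j)},
\]
and the key observation will be that $Q$ is a polynomial in $j$ of degree \emph{exactly} $n-1$, since the quotient of $\Gamma$'s is a rising Pochhammer product of length $n-1$. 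Once this is recognised, the vanishing follows from the standard fact that the $n$-th forward difference of any polynomial of degree strictly less than $n$ is zero. Assembling the three cases ($i<\ell$, $i=\ell$, $i>\ell$) then yields $\mbox{Proj}_{k,\ell}(P_k) = H_{k-2\ell}$ for every $P_k$, which is the claim.
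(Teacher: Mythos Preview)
Your argument is correct. The paper itself does not prove this proposition: it is quoted from the literature (Ben~Sa\"id--\O rsted, \cite{projection}) and used as a black box in the computation of the coefficients $\theta_{n,k,\ell}$. So there is nothing to compare against at the level of proof strategy.

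As a self-contained verification your approach works cleanly. The one-step identity $\Delta_{\ux}\bigl(|\ux|^{2s}H_p\bigr)=2s(2s+2p+m-2)|\ux|^{2s-2}H_p$ and its iterate are standard, and your reduction of the case $i>\ell$ to the vanishing of the $n$-th forward difference of a polynomial of degree $n-1$ is exactly right: with $A=\tfrac{m}{2}+k-2\ell-1-j$ one has
\[
Q(j)=\frac{\Gamma(A)}{\Gamma(A-(n-1))}=(A-1)(A-2)\cdots(A-n+1),
\]
a product of $n-1$ linear factors in $j$ (empty when $n=1$), hence $\sum_{j=0}^{n}(-1)^{j}\binom{n}{j}Q(j)=0$. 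A small cosmetic point: this product is a \emph{falling} factorial rather than a rising Pochhammer, but that does not affect the degree count or the conclusion. You might also remark explicitly that the factor $|\ux|^{2j}\cdot|\ux|^{2(i-j-\ell)}=|\ux|^{2(i-\ell)}$ is $j$-independent, which is what allows the sum over $j$ to be isolated as a pure scalar.
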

\subsection{Functions over the Lie Sphere}
The aim of the paper is to invert the Hua-Radon transform and the polarized Hua-Radon transform, which were defined in \cite{radonlie} as an orthogonal projection of functions over the Lie sphere. The definition of the Lie sphere can be found in e.g. \cite{Lie}. Let us recall the necessary definitions and results that we will need.

\begin{definition}
The Lie ball $LB(0,1)$ is defined as
\[
LB(0,1) = \{\uz=\ux+i\uy \in \mC^{m}\mid S_{\ux,\uy} \subset B(0,1)\}
\]
where $S_{\ux,\uy}$ is the codimension 2 sphere:

\[
S_{\ux,\uy} = \{\uu\in\mR^{m} \mid |\uu-\ux| = |\uy|, \langle\uu-\ux,\uy\rangle = 0\}.
\]
\end{definition}

\begin{definition}
The Lie sphere $LS^{m-1}$ is given by

\[
LS^{m-1} = \{e^{i\theta} \uom \in\mC^{m} \mid \uom\in S^{m-1}, \theta\in [0,\pi)\}.
\]
\end{definition}
We will be working with a space of holomorphic functions on the Lie sphere:

\begin{definition}
The right $\mC^m$-module containing all holomorphic functions $f:LB(0,1)\to\mC_m$ such that
\[
\left[\int_{\mS^{m-1}} \int_0^{\pi} \left[f(e^{i\theta}\uom)\right]^{\dagger} f(e^{i\theta}\uom) d\theta dS(\uom)\right]_0 < \infty
\]
will be denoted by $\mathcal{OL}^2 (LB(0,1))$.\\
The space $\mathcal{OL}^2 (LB(0,1))$ is equipped with the following inner product
\[
\langle f,g\rangle_{\mathcal{OL}^2 (LB(0,1))} = \int_{\mS^{m-1}}\int_{0}^{\pi}\left[f(e^{i\theta}\uom)\right]^{\dagger} g(e^{i\theta}\uom) d\theta dS(\uom).
\]
We will often write $\langle f,g\rangle_{\mathcal{OL}^2}$ for the inner product of $f$ and $g$.
\end{definition}
It was shown in \cite{Spherical and analytic, Lie} that any $f\in\mathcal{OL}^2(LB(0,1))$ admits an Almansi type decomposition:

\begin{equation}\label{eq::Almansi}
f(\uz) = \sum_{k=0}^{\infty} M_k(\uz) + \uz \sum_{\ell = 0}^{\infty} N_\ell(\uz)
\end{equation}
where $M_k \in \cM_k(\mC^m)$, $N_{\ell}\in \cM_{\ell}(\mC^m)$. A detailed account on the Almansi decomposition can be found in \cite{Aros Almansi, Balk Almansi, Richard Almansi, Almansi}.


\section{Inversion of the Hua-Radon transform}\label{Section::Hua}
\subsection{The Hua-Radon transform}
\setcounter{equation}{0}

Let us first recall the definition of the Hua-Radon transform which was introduced in \cite{radonlie}.\\
Let $\uta = \ut + i\us$ with $\ut,\us\in\mS^{m-1}$, $\langle\ut,\us\rangle = 0$ and let
\[
f_{\uta,k,\ell}(\uz) = \langle\uz, \uta\rangle^k\langle\uz, \utd\rangle^\ell.
\]
The closure of the $\mC_m$-submodule of $\mathcal{OL}^2(LB(0,1))$ generated by $\{f_{\uta,k,\ell}(\uz)\mid k,\ell\in\mN\}$ will be denoted by $\mathcal{OL}^2(\uta)$.\\
The functions $f_{\uta, k, \ell}(\uz)$ form an orthogonal basis for $\mathcal{OL}^2 (\uta)$ with respect to the inner product $\langle\cdot, \cdot\rangle_{\mathcal{OL}^2}$. We have the following result from \cite{radonlie}:
\begin{proposition}
Let $\uta = \ut + i\us$, where $\ut, \us\in\mS^{m-1}$ and $\langle\ut, \us\rangle = 0$. The functions $f_{\uta, k, \ell}(\uz)$ are such that
\[
\langle f_{\uta, k, \ell}, f_{\uta, k', \ell'}\rangle_{\mathcal{OL}^2} = 0 \qquad (k,\ell)\neq (k',\ell')
\]
and
\[
\langle f_{\uta, k, \ell}, f_{\uta, k, \ell}\rangle_{\mathcal{OL}^2} = 2\pi^{m/2+1} \frac{\Gamma(k+l+1)}{\Gamma(k+l+m/2)}.
\]
\end{proposition}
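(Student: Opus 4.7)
The plan is to evaluate the inner product in closed form by decoupling the angular variable $\theta$ from the spherical variable $\uom$, and then to handle the sphere integral using polar coordinates adapted to the 2-plane $\mathrm{span}(\ut,\us)$. The observation that sets everything up is that the complex pairing is bilinear, so that $\langle e^{i\theta}\uom,\uta\rangle = e^{i\theta}\langle\uom,\uta\rangle$, and therefore
\[
f_{\uta,k,\ell}(e^{i\theta}\uom) = e^{i\theta(k+\ell)}\langle\uom,\uta\rangle^k\langle\uom,\utd\rangle^\ell.
\]
For the Hermitian conjugate I would use that $\langle\uom,\uta\rangle$ is a complex scalar on which $\dagger$ reduces to complex conjugation; since $\uom$ is real and $\overline{\uta}=\ut-i\us=-\utd$, this gives $\overline{\langle\uom,\uta\rangle}=-\langle\uom,\utd\rangle$, and symmetrically $\overline{\langle\uom,\utd\rangle}=-\langle\uom,\uta\rangle$. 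Combining, one gets
\[
[f_{\uta,k,\ell}(e^{i\theta}\uom)]^\dagger f_{\uta,k',\ell'}(e^{i\theta}\uom) = (-1)^{k+\ell} e^{i\theta(k'+\ell'-k-\ell)}\langle\uom,\uta\rangle^{\ell+k'}\langle\uom,\utd\rangle^{k+\ell'}.
\]

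Next I would invoke the $SO(m)$-invariance of $dS(\uom)$ to reduce to the model frame $\ut=e_1$, $\us=e_2$. Then $\langle\uom,\uta\rangle=\omega_1+i\omega_2$ and $\langle\uom,\utd\rangle=-\omega_1+i\omega_2$, so that in polar coordinates $\omega_1=r\cos\phi$, $\omega_2=r\sin\phi$ on the first two coordinates they equal $re^{i\phi}$ and $-re^{-i\phi}$ respectively. The sphere integral then contains a factor $\int_0^{2\pi}e^{i(\ell+k'-k-\ell')\phi}d\phi$, which vanishes unless $\ell+k'=k+\ell'$. Under that constraint $k'+\ell'-k-\ell = 2(k'-k)$, and the $\theta$-integral $\int_0^\pi e^{2i(k'-k)\theta}d\theta$ equals $\pi$ when $k=k'$ and $0$ otherwise. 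Together this forces $(k,\ell)=(k',\ell')$, yielding orthogonality.

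For the norm I would specialize $k=k'$, $\ell=\ell'$: the two factors of $(-1)^{k+\ell}$ then cancel against the sign coming from $[(\omega_1+i\omega_2)(-\omega_1+i\omega_2)]^{k+\ell}=(-1)^{k+\ell}(\omega_1^2+\omega_2^2)^{k+\ell}$, so the full integral reduces to $\pi\int_{\mS^{m-1}}(\omega_1^2+\omega_2^2)^{k+\ell}dS(\uom)$. I expect this last sphere integral to be the only step requiring any real computation: decomposing $\uom = \sin\psi\,\xi+\cos\psi\,\eta$ with $\xi\in\mS^1$, $\eta\in\mS^{m-3}$, $\psi\in[0,\pi/2]$, the measure becomes $\sin\psi\cos^{m-3}\psi\,d\psi\,dS(\xi)\,dS(\eta)$ and the integrand reduces to $\sin^{2(k+\ell)}\psi$, so a standard Beta-function evaluation gives $\frac{2\pi^{m/2}\Gamma(k+\ell+1)}{\Gamma(k+\ell+m/2)}$; multiplying by the $\pi$ from the $\theta$-integral produces the announced constant. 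The main pitfall I anticipate is the bookkeeping of the many signs coming from $\overline{\uta}=-\utd$ and from $-\omega_1+i\omega_2=-re^{-i\phi}$, which have to cancel cleanly for the final norm to come out real and positive; the computation is otherwise routine.
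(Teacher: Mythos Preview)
Your argument is correct. The paper does not actually prove this proposition: it is quoted from \cite{radonlie} without proof, so there is no ``paper's own proof'' to compare against. Your computation is the natural one, and the bookkeeping checks out: the conjugation identity $\overline{\langle\uom,\uta\rangle}=-\langle\uom,\utd\rangle$, the decoupling into a $\phi$-integral forcing $\ell+k'=k+\ell'$ and a $\theta$-integral forcing $k=k'$, the sign cancellation in the norm, and the Beta-function evaluation of $\int_{\mS^{m-1}}(\omega_1^2+\omega_2^2)^{k+\ell}\,dS(\uom)=\dfrac{2\pi^{m/2}\Gamma(k+\ell+1)}{\Gamma(k+\ell+m/2)}$ are all right. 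The bi-polar decomposition you use is valid for all $m\ge 3$ (with $\mS^{m-3}=\mS^0$ when $m=3$), matching the paper's standing hypothesis.
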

The Hua-Radon transform is defined as the orthogonal projection
\begin{align*}
\cH_{\uta}:&\mathcal{OL}^2(LB(0,1))\to \mathcal{OL}^2(\uta):\\
&f \mapsto \int_{\mS^{m-1}}\int_{0}^{\pi} \cK_{\uta}(\uz, e^{-i\theta}\uom) f(e^{i\theta}\uom) d\theta dS(\uom)
\end{align*}
with
\begin{align*}
\cK_{\uta}(\uz, e^{-i\theta}\uom) &= \frac{1}{\pi A_m} \sum_{k,\ell=0}^{\infty} (-1)^{k+\ell} \frac{\Gamma\left(k+\ell+\frac{m}{2}\right)}{\Gamma(k+\ell+1) \Gamma\left(\frac{m}{2}\right)} a^k b^{\ell}\\
&=\frac{1}{\pi A_m} \sum_{s=0}^{\infty}\sum_{k=0}^{s} (-1)^{s} \frac{\Gamma\left(s+\frac{m}{2}\right)}{\Gamma(s+1) \Gamma\left(\frac{m}{2}\right)} a^{s-k} b^{k},
\end{align*}
where $a = \langle\uz,\uta\rangle\langle e^{-i\theta}\uom,\utd\rangle$ and $b = \langle\uz,\utd\rangle\langle e^{-i\theta}\uom,\uta\rangle$. The function $\cK_{\uta}(\uz, e^{-i\theta}\uom)$ is the reproducing kernel of the functions $f_{\uta, k, \ell}(uz)$, i.e.
\[
\int_{\mS^{m-1}}\int_{0}^{\pi} \cK_{\uta}(\uz, e^{-i\theta}\uom) f_{\uta, k,\ell}(e^{i\theta}\uom) d\theta dS(\uom) = f_{\uta, k, \ell}(\uz)
\]
\subsection{Some technical results}
In order to invert the Hua-Radon transform, we will integrate its kernel $\cK_{\uta}$ over a Stiefel manifold, which leads to the definition of the dual Radon transform:

\begin{definition}
The dual Radon transform $\tilde{R}[F(\uz,\uta)]$ is defined as

\[
\tilde{R}[F(\uz,\uta)] = \frac{1}{A_{m}A_{m-1}}\int_{\mS^{m-1}}\left(\int_{\mS^{m-2}}F(\uz,\uta) dS(\us)\right)dS(\ut),
\]
where $\mS^{m-2}\subseteq \mS^{m-1}$ is the $(m-2)$-sphere orthogonal to $\ut$.
\end{definition}

We will also need some results of zonal spherical harmonics and zonal spherical monogenics. The zonal spherical harmonics are given by, see e.g. \cite{zonalmono}

\[
K_{m,k}(\ux,\uy) = \frac{2k+m-2}{m-2}|\ux|^k|\uy|^k C_k^{\frac{m}{2}-1}(t),
\]
where $t = \frac{\langle\ux,\uy\rangle}{|\ux||\uy|}$ and $C_k^{\frac{m}{2}-1}(t)$ is a Gegenbauer polynomial. These zonal spherical harmonics have the property of reproducing harmonic polynomials, i.e.
\begin{equation}\label{eq::repr harm}
\frac{1}{A_m}\int_{\mS^{m-1}} K_{m,j}(\ux,\uom) H_k(\uom) dS(\uom) = \delta_{j,k} H_x(\ux)
\end{equation}
where $H_k\in\cH_k$. The zonal spherical monogenics are given by (see \cite{zonalmono})
\begin{align*}
\cC_{m,k}(\ux,\uy) &= (|\ux| |\uy|)^k \left(\frac{k+m-2}{m-2} C_k^{\frac{m}{2}-1}(t) + \frac{\ux\wedge\uy}{|\ux||\uy|}C_{k-1}^{\frac{m}{2}}(t)\right)\\
&= (|\ux| |\uy|)^k \left(C_k^{\frac{m}{2}}(t) + \frac{\ux\uy}{|\ux||\uy|}C_{k-1}^{\frac{m}{2}}(t)\right).
\end{align*}
and likewise, they are the reproducing kernel of the monogenic polynomials, i.e. for $M_k\in\cM_k$ we have
\begin{equation}\label{eq::repr mono}
\frac{1}{A_m}\int_{\mS^{m-1}} \cC_{m,j}(\ux,\uom) M_k(\uom) dS(\uom) = \delta_{j,k} M_x(\ux).
\end{equation}
\begin{remark}\label{remark::reproducing properties}
Note that the reproducing properties (\ref{eq::repr harm}) and (\ref{eq::repr mono}) also hold when integrating over the Lie sphere, i.e.
\[
\frac{1}{\pi A_m}\int_{\mS^{m-1}}\int_{0}^{\pi}K_{m,j}(\uz, e^{-i\theta}\uom) H_k(e^{i\theta}\uom) d\theta dS(\uom) = \delta_{jk}H_k(\uz)
\]
and
\[
\frac{1}{\pi A_m}\int_{\mS^{m-1}}\int_{0}^{\pi}\cC_{m,j}(\uz, e^{-i\theta}\uom) M_k(e^{i\theta}\uom) d\theta dS(\uom) = \delta_{jk}M_k(\uz),
\]
see \cite[Remark 3.6]{radonlie}.
\end{remark}
In \cite{Szego} it is shown that these zonal spherical monogenics are, up to a constant factor $\lambda + \mu e_{1\ldots m}$ with $\lambda,\mu\in\mathbb{C}$, the unique $k$-homogeneous polynomials satisfying certain properties as mentioned in Proposition \ref{Unicity zonal mono}.
\begin{proposition}\label{Unicity zonal mono}
Let $F(\ux,\uy)$ be a Clifford valued function which is a homogeneous polynomial of degree $k$ in both $\ux$ and $\uy$. If $F$ is left-monogenic in $\ux$ and right-monogenic in $\uy$, i.e. $\upx F(\ux,\uy) = 0 = F(\ux,\uy)\upy $ and if $F(\ux,\uy)$ is also spin-invariant, i.e. for $\sigma\in\textup{Spin}(m)=\{\prod_{i=1}^{2s}\sigma_i \mid \sigma_i\in\mS^{m-1}\}$ we have
\[
\sigma F(\overline{\sigma}\ux\sigma,\overline{\sigma}\uy\sigma)\overline{\sigma} = F(x,y).
\]
Then there exist complex constants $\lambda$ and $\mu$ such that
\[
F(\ux,\uy) = (\lambda + \mu e_{1\ldots m})\cC_{m,k}(\ux,\uy).
\]
\end{proposition}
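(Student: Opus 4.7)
The plan is to combine classical Spin$(m)$-invariant theory with the monogenic Fischer decomposition: first reduce $F$ to a canonical form on a two-dimensional slice using the invariance, then use bi-monogenicity to pin down its shape up to a central factor.

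First I would use the spin-invariance to rotate the arguments into a canonical position. Choosing $\sigma$ appropriately we may arrange $\ux = r_1 e_1$ and $\uy = y_1 e_1 + y_2 e_2$, where $r_1 = |\ux|$. The residual stabilizer is Spin$(m-2)$ fixing the plane $\text{span}(e_1,e_2)$, and its equivariant action on the Clifford value of $F$ forces that value to lie in the subalgebra generated by $1$, $e_1 e_2$ and the pseudoscalar $e_{1\ldots m}$, since the pseudoscalar commutes with every element of Spin$(m)$ under the twisted adjoint action. This already cuts the problem down drastically: on the slice, $F$ is a polynomial in $r_1, y_1, y_2$ with coefficients in a low-dimensional subspace.

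Next I would impose bi-$k$-homogeneity and then the bi-monogenicity conditions $\upx F = 0$ and $F\upy = 0$. On the slice these translate into a system of linear recurrences on the coefficients, equivalent to the three-term recurrence that characterises the Gegenbauer polynomials $C_k^{m/2}$. Matching these recurrences against the explicit expression for $\cC_{m,k}(\ux,\uy)$ recalled just before the proposition, one sees that the coefficients are determined up to an overall Clifford scalar, and the two-dimensional freedom is exactly supplied by the central element $e_{1\ldots m}$, producing
\[
F(\ux,\uy) = (\lambda + \mu e_{1\ldots m})\cC_{m,k}(\ux,\uy)
\]
on the slice. Spin-invariance of both sides then extends this identity to all of $\mR^m\times\mR^m$.

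The main obstacle I anticipate is the coefficient bookkeeping in the monogenicity step: extracting the Gegenbauer recurrence from $\upx F = 0 = F\upy$ requires careful use of the standard identities for the Dirac operator acting on products of $\ux$ with homogeneous polynomials, together with Proposition \ref{harmonics to monogenics} for translating between harmonic and monogenic pieces. A much cleaner route that would bypass the explicit recurrence altogether is to invoke the irreducibility of $\cM_k(\mR^m)$ as a Spin$(m)$-module and apply Schur's lemma: the space of spin-equivariant $\mC_m$-valued bilinear pairings $\cM_k \times \cM_k \to \mC_m$ is then two-dimensional, spanned by $\cC_{m,k}$ and $e_{1\ldots m}\cC_{m,k}$, which gives the statement immediately.
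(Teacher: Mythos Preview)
The paper does not prove this proposition at all: it is quoted from \cite{Szego} and used as a black box (see the sentence immediately preceding the proposition). So there is no ``paper's own proof'' to compare against. What the paper does reveal about the argument in \cite{Szego} is the auxiliary Lemma~5.3 cited inside the proof of Proposition~\ref{unicity harmonic}: a spin-invariant function of $(\ux,\uy)$ depends only on the basic invariants $|\ux|$, $|\uy|$, $\langle\ux,\uy\rangle$ (and, implicitly, on $\ux\wedge\uy$ through the Clifford value). That is precisely the content of your first reduction step, so your slice argument is morally the same starting point as the cited source.

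Your sketch is plausible but has two soft spots. First, the claim that the residual $\mathrm{Spin}(m-2)$-invariance forces the value of $F$ on the slice into the span of $1$, $e_1e_2$, $e_{1\ldots m}$ is not quite argued: you need to identify the fixed subalgebra of $\mC_m$ under conjugation by $\mathrm{Spin}(m-2)$, and this depends on the parity of $m-2$ (the centre of $\mC_{m-2}$ is one- or two-dimensional accordingly). Getting this wrong could leave an extra $e_{3\ldots m}$-component unaccounted for. Second, ``translate bi-monogenicity into the Gegenbauer three-term recurrence'' is the heart of the matter and you have only named it, not done it; this is exactly the bookkeeping you flag as the main obstacle, and without it the first route is not a proof.

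Your alternative via irreducibility of $\cM_k$ as a $\mathrm{Spin}(m)$-module and Schur's lemma is the cleaner path and would give the result in one stroke, provided you state carefully which category you are working in (right $\mC_m$-modules with compatible $\mathrm{Spin}(m)$-action) so that the endomorphism ring is the centre of $\mC_m$, i.e.\ $\mC\oplus\mC e_{1\ldots m}$ when $m$ is odd and $\mC$ when $m$ is even; the two-parameter conclusion $\lambda+\mu e_{1\ldots m}$ then covers both cases. If you pursue this route, make the Schur step explicit rather than leaving it as a remark.
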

We have a similar result if the function is harmonic in both $\ux$ and $\uy$.

\begin{proposition}\label{unicity harmonic}
Let $F(\ux,\uy)$ be a $k$-homogeneous polynomial in $\ux$ and $\uy$ that is harmonic in both $\ux$ and $\uy$, i.e. $\Delta_{\ux} [F(\ux,\uy)] = 0 = \Delta_{\uy}[F(\ux,\uy)] $. If $F$ is also spin-invariant, i.e. for $\sigma\in\textup{Spin}(m)$ we have
\[
\sigma F(\overline{\sigma}\ux\sigma,\overline{\sigma}\uy\sigma)\overline{\sigma} = F(\ux,\uy),
\]
then there exists complex constants $\rho_1,\rho_2, \nu_1,\nu_2 \in \mC$ such that
\[
F(\ux,\uy) = (\rho_1+\nu_1 e_{1\ldots m}) K_{m,k}(\ux,\uy) + (\rho_2+\nu_2 e_{1\ldots m}) \cC_{m,k}(\ux,\uy).
\]
\end{proposition}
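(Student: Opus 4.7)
The plan is to decompose $F$ into bi-monogenic pieces and then apply Proposition \ref{Unicity zonal mono}. Since $F(\ux,\uy)$ is $k$-homogeneous and harmonic in $\ux$, I would first apply the decomposition $\cH_k = \cM_k \oplus \ux\cM_{k-1}$ from Proposition \ref{harmonics to monogenics} in the $\ux$-variable to write
\[
F(\ux,\uy) = M(\ux,\uy) + \ux N(\ux,\uy),
\]
with $M$ and $N$ left-monogenic in $\ux$ of degrees $k$ and $k-1$ respectively. Since $\Delta_{\uy}$ commutes with $\upx$ and preserves left-monogenicity in $\ux$, applying $\Delta_{\uy}$ and using uniqueness of the decomposition forces both $M$ and $N$ to be harmonic in $\uy$. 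The right-sided analog of Proposition \ref{harmonics to monogenics}, which decomposes any $k$-homogeneous $\uy$-harmonic function uniquely as $P + Q\uy$ with $P$ and $Q$ right-monogenic in $\uy$ of degrees $k$ and $k-1$, applied to $M$ and $N$ yields the four-term expansion
\[
F(\ux,\uy) = M_1 + M_2\uy + \ux N_1 + \ux N_2\uy,
\]
where each $M_i, N_i$ is left-monogenic in $\ux$ (by commutativity of $\upx$ with the right-acting $\upy$) and right-monogenic in $\uy$, of bi-degrees $(k,k),(k,k-1),(k-1,k),(k-1,k-1)$ respectively.

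Next I would verify that each piece inherits the spin-invariance of $F$. Under $\ux \mapsto \bar\sigma\ux\sigma$, $\uy \mapsto \bar\sigma\uy\sigma$, $P \mapsto \sigma P \bar\sigma$, the relation $\sigma\bar\sigma = 1$ slides the spin factors past any leading $\ux$ or trailing $\uy$, and the uniqueness of the four-term decomposition forces each piece to be spin-invariant individually. Proposition \ref{Unicity zonal mono} now applies to $M_1$ (bi-degree $(k,k)$) and to $N_2$ (bi-degree $(k-1,k-1)$) and gives
\[
M_1 = (a_1 + b_1 e_{1\ldots m})\,\cC_{m,k}(\ux,\uy), \qquad N_2 = (a_2 + b_2 e_{1\ldots m})\,\cC_{m,k-1}(\ux,\uy).
\]

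The main obstacle is showing that the mixed-bi-degree pieces $M_2$ (bi-degree $(k,k-1)$) and $N_1$ (bi-degree $(k-1,k)$) must vanish. I would establish this by adapting the structural argument of \cite{Szego} underlying Proposition \ref{Unicity zonal mono}: a spin-invariant polynomial in $(\ux,\uy)$ with values in $\mC_m$ is generated by the scalar invariants $|\ux|^2,|\uy|^2,\langle\ux,\uy\rangle$, the objects $\ux,\uy,\ux\uy$, and the centralizer elements $1, e_{1\ldots m}$; the parity of the bi-degree mismatch together with the combined left- and right-monogenicity requirements rules out any non-trivial polynomial of the above form. Equivalently, since $\cM_a$ and $\cM_b$ are non-isomorphic irreducible $\textup{Spin}(m)$-representations for $a \neq b$, Schur's lemma applied to the decomposition of $\mC_m$ as a $\textup{Spin}(m)$-representation (whose centralizer contributes only $1$ and $e_{1\ldots m}$) forces the space of spin-invariants in the mixed-bi-degree bi-monogenic modules to be zero. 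Hence $M_2 = N_1 = 0$.

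Once this vanishing is in hand, one has $F = M_1 + \ux N_2 \uy$. Using the closed form $\cC_{m,j}(\ux,\uy) = (|\ux||\uy|)^j\bigl[C_j^{m/2}(t) + \tfrac{\ux\uy}{|\ux||\uy|}\, C_{j-1}^{m/2}(t)\bigr]$ together with $\ux^2 = -|\ux|^2$ and $\uy^2 = -|\uy|^2$, a direct calculation gives
\[
\ux\, \cC_{m,k-1}(\ux,\uy)\, \uy = (|\ux||\uy|)^k C_{k-2}^{m/2}(t) + \ux\uy\,(|\ux||\uy|)^{k-1}\, C_{k-1}^{m/2}(t),
\]
and the Gegenbauer type-shift identity $C_k^{m/2}(t) - C_{k-2}^{m/2}(t) = \tfrac{2k+m-2}{m-2}\, C_k^{m/2-1}(t)$ identifies this with $\cC_{m,k}(\ux,\uy) - K_{m,k}(\ux,\uy)$. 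Substituting back and collecting the constants (sliding $a_2 + b_2 e_{1\ldots m}$ past $\ux$, which at worst flips the sign of the pseudo-scalar coefficient depending on the parity of $m$) yields the claimed representation
\[
F(\ux,\uy) = (\rho_1 + \nu_1 e_{1\ldots m})\, K_{m,k}(\ux,\uy) + (\rho_2 + \nu_2 e_{1\ldots m})\, \cC_{m,k}(\ux,\uy).
\]
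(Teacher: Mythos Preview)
Your proposal is correct and follows essentially the same route as the paper: the four-term bi-monogenic decomposition, propagation of spin-invariance to each piece, elimination of the mixed-degree terms, application of Proposition~\ref{Unicity zonal mono} to the two surviving pieces, and the Gegenbauer identity to rewrite $\ux\,\cC_{m,k-1}\,\uy$ in terms of $\cC_{m,k}$ and $K_{m,k}$.

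The only substantive difference is in how you kill the mixed-degree pieces $M_2$ and $N_1$. You invoke the invariant-theoretic description of spin-invariants and a Schur's-lemma argument on the irreducible $\mathrm{Spin}(m)$-modules $\cM_a$, $\cM_b$. The paper instead appeals to \cite[Lemma~5.3]{Szego}, which says that a bi-monogenic spin-invariant $M_{i,j}$ is a function of the scalar invariants $|\ux|$, $|\uy|$, $\langle\ux,\uy\rangle$ only, and then observes that these invariants are fixed by $(\ux,\uy)\mapsto(-\ux,-\uy)$ while $M_{i,j}$ picks up the sign $(-1)^{i+j}$; hence $M_{k,k-1}=M_{k-1,k}=0$. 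This parity trick is more elementary and avoids both the full invariant-theoretic classification and any representation-theoretic machinery, so you may want to adopt it in place of your sketched Schur argument.
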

\begin{proof}
Using Proposition \ref{harmonics to monogenics} for both $\ux$ and $\uy$ we get

\[
F(\ux,\uy) = M_{k,k}(\ux,\uy) + \ux M_{k-1,k} (\ux,\uy) + M_{k,k-1}(\ux,\uy)\uy + \ux M_{k-1,k-1}(\ux,\uy) \uy
\]
where $M_{i,j}(\ux,\uy)$ is a left-monogenic, $i$-homogeneous function with respect to $\ux$ and right-monogenic, $j$-homogeneous function with respect to $\uy$. Since $F$ is spin-invariant, we get for each $\sigma\in\mbox{Spin}(m)$

\begin{align*}
F(\ux,\uy) =& \sigma F(\overline{\sigma}\ux\sigma,\overline{\sigma}\uy\sigma)\overline{\sigma}\\
=& \sigma M_{k,k}(\overline{\sigma}\ux\sigma,\overline{\sigma}\uy\sigma)\overline{\sigma} + \ux \sigma M_{k-1,k} (\overline{\sigma}\ux\sigma,\overline{\sigma}\uy\sigma)\overline{\sigma}\\
& + \sigma M_{k,k-1}(\overline{\sigma}\ux\sigma,\overline{\sigma}\uy\sigma)\overline{\sigma}\uy + \ux \sigma M_{k-1,k-1}(\overline{\sigma}\ux\sigma,\overline{\sigma}\uy\sigma)\overline{\sigma} \uy.
\end{align*}
as $\sigma\overline{\sigma} = 1$. In \cite{Gil Mur spin} it was shown that $\upx$ is spin-invariant and hence $\sigma M_{i,j}(\overline{\sigma}\ux\sigma,\overline{\sigma}\uy\sigma)\overline{\sigma}$ is monogenic. Now using the uniqueness of the decomposition in Proposition \ref{harmonics to monogenics}, we get
\[
\sigma M_{i,j}(\overline{\sigma}\ux\sigma,\overline{\sigma}\uy\sigma)\overline{\sigma} = M_{i,j}(\ux,\uy) \qquad i,j \in\{k,k-1\}.
\]
Hence the functions $M_{i,j}$ only depend on $|\ux|,|\uy|$ and $\langle \ux,\uy\rangle$, see \cite[Lemma 5.3]{Szego}. The latter are of course invariant under the transformation $\ux\mapsto -\ux,\ \uy\mapsto -\uy$, whereas on the other hand $M_{i,j}(\ux, \uy)$ is sent to $(-1)^{i+j}M_{i,j}(\ux,\uy)$ by this transformation. Consequently, we must have that $M_{k,k-1}(\ux,\uy) = M_{k-1,k}(\ux,\uy) = 0$, and thus we find
\[
F(\ux,\uy) = M_{k,k}(\ux,\uy) + \ux M_{k-1,k-1}(\ux,\uy) \uy
\]
where $M_{k,k}$ and $M_{k-1,k-1}$ are homogeneous polynomials, left-monogenic in $\ux$ and right-monogenic in $\uy$ and they are spin-invariant. Using Proposition \ref{Unicity zonal mono}, we get

\begin{align}\label{equa harmon zonal mono}
F(\ux,\uy) =& \alpha \cC_{m,k}(\ux,\uy) + \beta \ux \cC_{m,k-1} (\ux,\uy)\uy \nonumber\\
=& \alpha(|\ux| |\uy|)^k \left(C_k^{\frac{m}{2}}(t) + \frac{\ux\uy}{|\ux||\uy|}C_{k-1}^{\frac{m}{2}}(t)\right) \nonumber\\
&+ \beta\ux (|\ux| |\uy|)^{k-1} \left(C_{k-1}^{\frac{m}{2}}(t) + \frac{\ux\uy}{|\ux||\uy|}C_{k-2}^{\frac{m}{2}}(t)\right) \uy\nonumber\\
=& (\alpha+\beta) (|\ux| |\uy|)^k \left(C_k^{\frac{m}{2}}(t) + \frac{\ux\uy}{|\ux||\uy|}C_{k-1}^{\frac{m}{2}}(t)\right)\\
&- \beta (|\ux| |\uy|)^k \left(C_k^{\frac{m}{2}}(t) - C_{k-2}^{\frac{m}{2}}(t)\right)\nonumber\\
=& (\alpha+\beta) \cC_{m,k}(\ux,\uy) - \beta\frac{2k+m-2}{m-2}(|\ux| |\uy|)^kC_k^{\frac{m}{2}-1}(t)\nonumber\\
=&(\alpha+\beta) \cC_{m,k}(\ux,\uy) - \beta K_{m,k}(\ux,\uy),\nonumber
\end{align}
where $\alpha = a_1+b_1 e_{1\ldots n}$ and $\beta = a_2+b_2 e_{1\ldots n}$ with $a_1,a_2,b_1,b_2\in\mC$ and in the second to last line we used that (see e.g. \cite{zonalmono})

\[
C_k^{\frac{m}{2}}(t) - C_{k-2}^{\frac{m}{2}}(t) = \frac{2k+m-2}{m-2}C_k^{\frac{m}{2}-1}(t).
\]
\end{proof}

\begin{remark}\label{rem::scalar valued harmonic}
Note that if $F$ is a scalar valued function with the same properties as in Proposition \ref{unicity harmonic}, then $F(x,y) = \alpha K_{m,k}(x,y)$ with $\alpha\in\mC$.
\end{remark}
\begin{remark}\label{rem harmon to mono zonal}
Using the equations in (\ref{equa harmon zonal mono}), we can easily see that

\[
K_{m,k+1}(\ux,\uy) = \cC_{m,k+1}(\ux,\uy) - \ux \cC_{m,k}(\ux,\uy)\uy.
\]
\end{remark}
Using Proposition \ref{Unicity zonal mono} and Proposition \ref{unicity harmonic} we will prove that the dual Radon transform of each of the terms of the kernel of the Hua-Radon transform are equal to a linear combination of the zonal spherical harmonics. Recall that when we are working in the complex case, we are complexifying our operators. Hence it suffices to show this equality for real variables. Afterwards, we use Remark \ref{rem harmon to mono zonal} and the reproducing properties (\ref{eq::repr mono}) to find an inversion formula.\\
We have the following:

\begin{proposition}\label{prop Fischer decomp}
Suppose that $F(\ux,\uy)$ is a homogeneous polynomial of degree $k+\ell$ in both $\ux$ and $\uy$, then

\[
F(\ux,\uy) = \sum_{j=0}^{\lfloor\frac{k+\ell}{2}\rfloor}\sum_{j'=0}^{\lfloor\frac{k+\ell}{2}\rfloor} |\ux|^{2j} H_{k+\ell-2j,k+\ell-2j'}(\ux,\uy)|\uy|^{2j'}
\]
where $H_{k+\ell-2j,k+\ell-2j'}(\ux,\uy)$ is a spherical harmonic of degree $k+\ell-2j$ in $\ux$ and of degree $k+\ell-2j'$ in $\uy$.
\end{proposition}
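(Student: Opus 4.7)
The plan is to iterate the harmonic Fischer decomposition (\ref{eq::harmon fisch decomp}), applying it first in the variable $\ux$ and then in $\uy$, treating the polynomial as taking values in a Clifford module whose scalars depend on the other variable. Since $F(\ux,\uy)$ is homogeneous of degree $k+\ell$ in $\ux$, I view it as an element of $\cP_{k+\ell}(\mR^m)\otimes (\cP_{k+\ell}(\mR^m)\otimes\mR_m)$, that is, a polynomial in $\ux$ of degree $k+\ell$ whose coefficients are polynomials in $\uy$ (themselves Clifford-valued and of degree $k+\ell$). Applying (\ref{eq::harmon fisch decomp}) in $\ux$ yields a unique decomposition
\[
F(\ux,\uy) \;=\; \sum_{j=0}^{\lfloor (k+\ell)/2\rfloor} |\ux|^{2j}\,\widetilde{H}_{j}(\ux,\uy),
\]
where, for each fixed $\uy$, the function $\widetilde{H}_{j}(\ux,\uy)$ is a spherical harmonic of degree $k+\ell-2j$ in $\ux$.

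Next I check that each $\widetilde{H}_{j}(\ux,\uy)$ is still homogeneous of degree $k+\ell$ in $\uy$. This follows from the uniqueness of the Fischer decomposition together with the fact that $|\ux|^{2j}$ does not involve $\uy$: applying the Euler operator $\mE_{\uy}$ to both sides and equating Fischer components forces $\mE_{\uy}\widetilde{H}_{j} = (k+\ell)\widetilde{H}_{j}$. I then apply the Fischer decomposition (\ref{eq::harmon fisch decomp}) a second time, now in $\uy$, to each $\widetilde{H}_{j}$, producing
\[
\widetilde{H}_{j}(\ux,\uy) \;=\; \sum_{j'=0}^{\lfloor (k+\ell)/2\rfloor} H_{k+\ell-2j,\,k+\ell-2j'}(\ux,\uy)\,|\uy|^{2j'},
\]
where each $H_{k+\ell-2j,k+\ell-2j'}$ is, as a function of $\uy$, a spherical harmonic of degree $k+\ell-2j'$.

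The final step, which is the only mildly delicate point, is to verify that the second decomposition preserves the harmonicity and homogeneity already achieved in $\ux$. Since $|\uy|^{2j'}$ does not depend on $\ux$, applying $\Delta_{\ux}$ to the displayed identity gives
\[
0 \;=\; \Delta_{\ux}\widetilde{H}_{j}(\ux,\uy) \;=\; \sum_{j'=0}^{\lfloor (k+\ell)/2\rfloor} \bigl(\Delta_{\ux} H_{k+\ell-2j,\,k+\ell-2j'}\bigr)(\ux,\uy)\,|\uy|^{2j'}.
\]
Because $\Delta_{\ux}$ commutes with $\Delta_{\uy}$ and with $\mE_{\uy}$, each $\Delta_{\ux} H_{k+\ell-2j,\,k+\ell-2j'}$ is again a spherical harmonic of degree $k+\ell-2j'$ in $\uy$. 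By uniqueness of the Fischer decomposition in $\uy$, each such term must vanish, so $H_{k+\ell-2j,\,k+\ell-2j'}$ is harmonic in $\ux$. A parallel argument with $\mE_{\ux}$ shows it is $(k+\ell-2j)$-homogeneous in $\ux$. Combining the two expansions gives the claimed double decomposition.

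The main obstacle I expect is not a computational one but a bookkeeping one: arguing cleanly that the two applications of the Fischer decomposition do not interfere with each other, i.e.\ that harmonicity and homogeneity in $\ux$ survive the subsequent expansion in $\uy$. This is handled by the commutation of the $\ux$ and $\uy$ operators combined with the uniqueness statement built into (\ref{eq::harmon fisch decomp}), as sketched above.
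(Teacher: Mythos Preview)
Your proof is correct and follows exactly the approach the paper takes: apply the harmonic Fischer decomposition (\ref{eq::harmon fisch decomp}) first in $\ux$ and then in $\uy$. The paper's own proof is a single sentence stating precisely this, so your version simply spells out the bookkeeping (preservation of $\ux$-harmonicity and homogeneity under the $\uy$-decomposition via uniqueness and commutation of the operators) that the paper leaves implicit.
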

\begin{proof}

The proof follows by applying the harmonic Fischer decomposition (\ref{eq::harmon fisch decomp}) first in $\ux$ and then in $\uy$.
\end{proof}
\begin{remark}
Let $F(\ux,\uy)$ be a homogeneous polynomial of degree $k+l$ in both $\ux$ and $\uy$. If $\Delta_{\ux}^{r+1} [F(\ux,\uy)] = \Delta_{\uy}^{s+1} [F(\ux,\uy)] = 0$, $\Delta_{\ux}^{r} [F(\ux,\uy)] \neq 0$ and $\Delta_{\uy}^{s} [F(\ux,\uy)] \neq 0$, then
\[
F(\ux,\uy) = \sum_{j=0}^{r}\sum_{j'=0}^{s} |\ux|^{2j} H_{k+\ell-2j,k+\ell-2j'}(\ux,\uy)|\uy|^{2j'}
\]
\end{remark}

\begin{lemma}\label{lemma laplacian}
For $j\leq \min\{k,\ell\}$ we have
\[
\Delta_{\ux}^{j} \left[\langle\ux,\uta\rangle^{k}\langle\ux,\utd\rangle^{\ell}\right] = (-4)^j \frac{k!}{(k-j)!}\frac{\ell!}{(\ell-j)!} \langle\ux,\uta\rangle^{k-j}\langle\ux,\utd\rangle^{\ell-j}.
\]
If $j>\min\{k,\ell\}$ then
\[
\Delta_{\ux}^{j} \left[\langle\ux,\uta\rangle^{k}\langle\ux,\utd\rangle^{\ell}\right] =0.
\]
\end{lemma}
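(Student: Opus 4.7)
The plan is to proceed by induction on $j$, with the base case $j=1$ being the main computation. The key algebraic inputs are the complex scalar products extracted from Lemma \ref{lemma::tau}: since $\uta^2 = -\langle\uta,\uta\rangle$, the identities $\uta^2 = (\utd)^2 = 0$ give $\langle\uta,\uta\rangle = \langle\utd,\utd\rangle = 0$, while $\uta\utd + \utd\uta = 4$ combined with (\ref{product vectors}) yields $\langle\uta,\utd\rangle = -2$.

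For $j=1$, I apply the Leibniz rule for the Laplacian,
\[
\Delta_{\ux}[fg] = (\Delta_{\ux} f)g + 2\sum_{i=1}^m (\partial_{x_i} f)(\partial_{x_i} g) + f(\Delta_{\ux} g),
\]
with $f = \langle\ux,\uta\rangle^k$ and $g = \langle\ux,\utd\rangle^{\ell}$. Since $\partial_{x_i}\langle\ux,\uta\rangle$ is the $i$-th component of $\uta$, a direct computation gives $\Delta_{\ux} f = k(k-1)\langle\ux,\uta\rangle^{k-2}\langle\uta,\uta\rangle = 0$, and similarly $\Delta_{\ux} g = 0$. The cross term collapses to $2k\ell\langle\ux,\uta\rangle^{k-1}\langle\ux,\utd\rangle^{\ell-1}\langle\uta,\utd\rangle = -4k\ell\langle\ux,\uta\rangle^{k-1}\langle\ux,\utd\rangle^{\ell-1}$, which matches the claimed formula at $j=1$.

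For the inductive step, assuming the formula at level $j < \min\{k,\ell\}$, I apply the base case with $(k,\ell)$ replaced by $(k-j,\ell-j)$ to pick up an extra factor $-4(k-j)(\ell-j)$. Telescoping the constants then yields $(-4)^{j+1}\frac{k!}{(k-j-1)!}\frac{\ell!}{(\ell-j-1)!}$, as required. For the vanishing case $j > \min\{k,\ell\}$ I may assume $k \leq \ell$; the formula at $j = k$ leaves the pure power $(-4)^k k!\frac{\ell!}{(\ell-k)!}\langle\ux,\utd\rangle^{\ell-k}$, on which any further application of $\Delta_{\ux}$ vanishes because $\langle\utd,\utd\rangle = 0$.

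There is no genuine obstacle: once the three complex inner products are in hand, the rest is Leibniz-rule bookkeeping. The only point worth flagging is that the computation is purely algebraic and so extends verbatim to the complexified Laplacian $\Delta_{\uz}$, which is what will be needed when the real argument $\ux$ is replaced by the complex variable $\uz$ appearing in the Hua-Radon kernel $\cK_{\uta}$.
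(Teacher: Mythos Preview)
Your proof is correct and follows essentially the same approach as the paper: both compute the base case $j=1$ via the Leibniz rule and the identities $\langle\uta,\uta\rangle=\langle\utd,\utd\rangle=0$, $\langle\uta,\utd\rangle=-2$ extracted from Lemma~\ref{lemma::tau}, then iterate. Your write-up is merely more explicit about the Leibniz decomposition and the extraction of the scalar products than the paper's terse ``one can verify.''
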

\begin{proof}
Using Lemma \ref{lemma::tau} one can verify that

\begin{align*}
\Delta_{\ux}\left[\langle\ux,\uta\rangle^{k}\langle\ux,\utd\rangle^{\ell}\right] &= 2k\ell\langle\ux,\uta\rangle^{k-1}\langle\ux,\utd\rangle^{\ell-1} \langle\uta,\utd\rangle\\
&=-4k\ell\langle\ux,\uta\rangle^{k-1}\langle\ux,\utd\rangle^{\ell-1}.
\end{align*}
Hence repeating this process gives

\begin{equation}\label{Delta^j <>^k<>^l}
\Delta_{\ux}^{j} \left[\langle\ux,\uta\rangle^{k}\langle\ux,\utd\rangle^{\ell}\right] = (-4)^j \frac{k!}{(k-j)!}\frac{\ell!}{(\ell-j)!} \langle\ux,\uta\rangle^{k-j}\langle\ux,\utd\rangle^{\ell-j} .
\end{equation}
Supposing that $\min\{k,\ell\} = \ell<k$ then
\[
\Delta_{\ux}^{\ell+1}[\langle \ux,\uta\rangle^k \langle \ux,\utd \rangle^\ell] = (-4)^k \frac{k!\,\ell!}{(k-\ell)!} \Delta_{\ux}[\langle \ux,\uta \rangle^{k-\ell}]=0
\] since
\begin{align*}
\Delta_{\ux} \left[\langle\ux,\uta\rangle^{k-\ell}\right] &= (k-\ell)(k-\ell-1)\langle\ux,\uta\rangle^{k-\ell-2} \sum_{i=1}^m \tau_{i}^2\\
&= (k-\ell)(k-\ell-1)\langle\ux,\uta\rangle^{k-\ell-2} (-\uta^2)\\
&=0
\end{align*}
where we used Lemma \ref{lemma::tau}. If $\min\{k,\ell\} = k<\ell$, we get analogously,
\[
\Delta_{\ux}^{k+1}\left[\langle \ux,\uta\rangle^k \langle \ux,\utd \rangle^\ell\right] = 0.
\]
If $k= \ell$, then taking $j = k$ in equation (\ref{Delta^j <>^k<>^l}) yields a scalar. Letting $\Delta_{\ux}$ act one more time on equation (\ref{Delta^j <>^k<>^l}) yields zero. This proves our claim.
\end{proof}
\begin{remark}
A consequence of Lemma \ref{lemma laplacian} is that the functions $f_{\uta, k,\ell}(\uz)$ are null-solutions of $\Delta_{\uz}^{\min\{k,\ell\}+1}$.
\end{remark}

\begin{corollary}\label{cor}
Let $K_{m,j}$ be the zonal spherical harmonic of degree $j$ and let $k\geq l$. Then for $\uta=\ut+i\us$, with $\ut,\us\in\mS^{m-1}$ such that $\langle\ut,\us\rangle = 0$, there exists constants $\theta_{j,k,l}$ such that

\begin{align*}
\frac{1}{A_m A_{m-1}}\int_{\mathbb{S}^{m-1}}\int_{\mathbb{S}^{m-2}} \langle\ux,\uta\rangle^{k}\langle\ux,\utd\rangle^{\ell}&\langle\uy,\uta\rangle^{\ell}\langle\uy,\utd\rangle^{k}dS(\us)dS(\ut)\\
&= \sum_{j=0}^{\ell} \theta_{j,k,\ell} |\ux|^{2j} K_{m,k+\ell-2j}(\ux,\uy)|\uy|^{2j}.
\end{align*}
\end{corollary}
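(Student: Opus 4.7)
The plan is to set
\[
F(\ux,\uy) := \frac{1}{A_m A_{m-1}} \int_{\mS^{m-1}}\int_{\mS^{m-2}} \langle\ux,\uta\rangle^{k}\langle\ux,\utd\rangle^{\ell}\langle\uy,\uta\rangle^{\ell}\langle\uy,\utd\rangle^{k}\,dS(\us)\,dS(\ut)
\]
and extract the claimed expansion from the harmonic Fischer decomposition of Proposition \ref{prop Fischer decomp}. First observe that $F$ is a scalar-valued polynomial that is homogeneous of degree $k+\ell$ in each of $\ux,\uy$. Spin-invariance of $F$ follows from a change of variables on the Stiefel manifold: for $\sigma\in\textup{Spin}(m)$ the map $(\ut,\us)\mapsto(\sigma\ut\overline{\sigma},\sigma\us\overline{\sigma})$ preserves the integration measure and sends $\uta$ to $\sigma\uta\overline{\sigma}$ and $\utd$ to $\sigma\utd\overline{\sigma}$, while $\langle\overline{\sigma}\ux\sigma,\uta\rangle=\langle\ux,\sigma\uta\overline{\sigma}\rangle$. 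Hence $F(\overline{\sigma}\ux\sigma,\overline{\sigma}\uy\sigma)=F(\ux,\uy)$.

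Next, Proposition \ref{prop Fischer decomp} yields
\[
F(\ux,\uy)=\sum_{j,j'\geq 0}|\ux|^{2j}\,H_{k+\ell-2j,\,k+\ell-2j'}(\ux,\uy)\,|\uy|^{2j'},
\]
where $H_{p,q}$ is harmonic of degree $p$ in $\ux$ and degree $q$ in $\uy$. By Lemma \ref{lemma laplacian}, since $\min\{k,\ell\}=\ell$, the integrand is annihilated by both $\Delta_{\ux}^{\ell+1}$ and $\Delta_{\uy}^{\ell+1}$; passing these operators under the integrals gives $\Delta_{\ux}^{\ell+1}F=\Delta_{\uy}^{\ell+1}F=0$. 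Because distinct Fischer components are linearly independent and $\Delta_{\ux}^{\ell+1}\!\left(|\ux|^{2j}H_{p}\right)$ vanishes precisely when $j\leq\ell$, this forces $j,j'\leq\ell$ in the sum.

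The heart of the argument is to collapse the remaining double sum to the diagonal $j=j'$. By uniqueness of the Fischer decomposition, each component $H_{k+\ell-2j,\,k+\ell-2j'}$ inherits the scalar-valuedness and spin-invariance of $F$. I would then invoke that a scalar, $\textup{Spin}(m)$-invariant polynomial that is harmonic of degree $p$ in $\ux$ and of degree $q$ in $\uy$ must vanish when $p\neq q$: by Weyl's first fundamental theorem any such invariant lies in $\mC[|\ux|^2,|\uy|^2,\langle\ux,\uy\rangle]$, and a short computation in this generator ring shows that no nonzero element is jointly harmonic unless $p=q$. Equivalently, $\cH_p$ and $\cH_q$ are non-isomorphic irreducible $\textup{SO}(m)$-modules, so $(\cH_p\otimes\cH_q)^{\textup{SO}(m)}=0$ for $p\neq q$.

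Only the diagonal contributions $j=j'$ therefore survive, and for each of these Remark \ref{rem::scalar valued harmonic} identifies $H_{k+\ell-2j,\,k+\ell-2j}(\ux,\uy)=\theta_{j,k,\ell}\,K_{m,k+\ell-2j}(\ux,\uy)$ for some $\theta_{j,k,\ell}\in\mC$, which produces the stated formula. The main obstacle is precisely this off-diagonal vanishing step: Proposition \ref{unicity harmonic} and Remark \ref{rem::scalar valued harmonic} as stated only address the equal-degree case, so one must supply the invariant-theoretic (or direct monomial) argument above to eliminate the $j\neq j'$ terms and reduce the sum to a single index.
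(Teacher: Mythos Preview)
Your argument is correct and follows the same overall architecture as the paper's proof: establish bihomogeneity and spin-invariance of the integral, apply the double harmonic Fischer decomposition of Proposition~\ref{prop Fischer decomp}, truncate the indices at $\ell$ via Lemma~\ref{lemma laplacian}, collapse to the diagonal $j=j'$, and then invoke Remark~\ref{rem::scalar valued harmonic} on each surviving diagonal piece.

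The one genuine difference is how you eliminate the off-diagonal components $H_{k+\ell-2j,\,k+\ell-2j'}$ for $j\neq j'$. You argue via invariant theory: each Fischer component inherits scalar $SO(m)$-invariance from $F$, and then $(\cH_p\otimes\cH_q)^{SO(m)}=0$ for $p\neq q$ by irreducibility and Schur's lemma (or, concretely, any such invariant lies in $\mC[|\ux|^2,|\uy|^2,\langle\ux,\uy\rangle]$, and for $p>q$ every monomial carries a factor $|\ux|^2$, contradicting harmonicity in $\ux$ unless the polynomial vanishes). The paper instead exploits a symmetry specific to this integrand: interchanging $\ux\leftrightarrow\uy$ gives $L_{k,\ell}(\uy,\ux)=\bigl[L_{k,\ell}(\ux,\uy)\bigr]^\dagger$, and uniqueness of the Fischer decomposition then forces $H_{k+\ell-2j,\,k+\ell-2j'}(\uy,\ux)=\bigl[H_{k+\ell-2j,\,k+\ell-2j'}(\ux,\uy)\bigr]^\dagger$; comparing the bidegrees on the two sides yields $j=j'$. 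The paper's trick is completely elementary and self-contained but tailored to this particular kernel, whereas your invariant-theoretic route is more conceptual, needs no special symmetry of the integrand, and would apply verbatim to any scalar spin-invariant polynomial in two vector variables.
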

\begin{proof}
If we define the integral on the left-hand side in Corollary \ref{cor} as $L_{k,\ell}(\ux,\uy)$, then we see that this integral is homogeneous of degree $k+\ell$ in both $\ux$ and $\uy$. Using Lemma \ref{lemma laplacian} we see that $L_{k,\ell}$ is a solution of $\Delta^{\ell+1}[F]=0$ and not of $\Delta^{\ell}[F]=0$. Hence applying Proposition \ref{prop Fischer decomp} yields

\[
L_{k,\ell}(\ux,\uy) = \sum_{j=0}^{\ell}\sum_{j'=0}^{\ell}  |\ux|^{2j} H_{k+\ell-2j,k+\ell-2j'}(\ux,\uy)|\uy|^{2j'}.
\]
If we interchange $\ux$ and $\uy$, one can easily see that $L_{k,\ell}(\uy,\ux)  = \left[L_{k,\ell}(\ux,\uy)\right]^{\dagger}$ and therefore
\begin{align*}
\sum_{j=0}^{\ell}\sum_{j'=0}^{\ell}  |\uy|^{2j} &H_{k+\ell-2j,k+\ell-2j'}(\uy,\ux)|\ux|^{2j'} \\
&= \sum_{j=0}^{\ell}\sum_{j'=0}^{\ell} \left( |\ux|^{2j} H_{k+\ell-2j,k+\ell-2j'}(\ux,\uy)|\uy|^{2j'}\right)^{\dagger}\\
&=\sum_{j=0}^{\ell}\sum_{j'=0}^{\ell} |\uy|^{2j} \left[H_{k+\ell-2j,k+\ell-2j'}(\ux,\uy)\right]^{\dagger}|\ux|^{2j'}.
\end{align*}
By uniqueness of the Fischer decomposition, we get
\[
H_{k+\ell-2j,k+l-2j'}(\uy,\ux) = \left[H_{k+\ell-2j,k+l-2j'}(\ux,\uy)\right]^{\dagger}.
\]
But the left-hand side is a polynomial of degree $k+\ell-2j$ in $\uy$ and of degree $k+\ell-2j'$ in $\ux$, whereas the right-hand side  is a polynomial of degree $k+\ell-2j'$ in $\uy$ and of degree $k+\ell-2j$ in $\ux$. Consequently, $H_{k+\ell-2j,k+\ell-2j'}(\uy,\ux) = 0$ whenever $j\neq j'$. Hence the remaining decomposition is
\[
L_{k,\ell}(\ux,\uy) = \sum_{j=0}^{\ell} |\ux|^{2j} H_{k+\ell-2j,k+\ell-2j}(\ux,\uy)|\uy|^{2j}.
\]
Furthermore, let $\sigma\in \text{Spin}(m)$. First of all note that $\mbox{Spin}(m)/\{-1,1\} \simeq SO(m)$ and $\sigma\ux\overline{\sigma}$ is just a rotation of the vector $\ux$, see \cite{Friederich spin, Gil Mur spin}. Hence

\begin{align*}
\sigma L_{k,\ell}(\overline{\sigma}\ux\sigma, \overline{\sigma}\uy\sigma)\overline{\sigma} &= \int_{\mathbb{S}^{m-1}}\int_{\mathbb{S}^{m-2}} \langle\ux,\sigma\uta\overline{\sigma}\rangle^{k}\langle\ux,\sigma\utd\overline{\sigma}\rangle^{\ell}\langle\uy,\sigma\uta\overline{\sigma}\rangle^{\ell}\langle\uy,\sigma\utd\overline{\sigma}\rangle^{k}dS(\us)dS(\ut)\\
&= \int_{\mathbb{S}^{m-1}}\int_{\mathbb{S}^{m-2}} \langle\ux,\uta\rangle^{k}\langle\ux,\utd\rangle^{\ell}\langle\uy,\uta\rangle^{\ell}\langle\uy,\utd\rangle^{k}dS(\us)dS(\ut).
\end{align*}
Hence $L_{k,\ell}(\ux,\uy)$ is Spin-invariant, which implies that $H_{k+\ell-2j,k+\ell-2j}(\ux,\uy)$ is Spin-invariant for each $j$. Note that the integral $L_{k,l}$ is scalar valued. This means that, using Remark \ref{rem::scalar valued harmonic}, we can find constants $\theta_{j,k,\ell} \in \mC$ such that 

\[
H_{k+l-2j,k+\ell-2j}(\ux,\uy) = \theta_{j,k,\ell} K_{m,k+\ell-2j}(\ux,\uy).
\]
\end{proof}

\begin{remark}
Note that if $\ell\geq k$ in Corollary \ref{cor}, then the result stays the same, but the upper limit of the sum will be $k$ instead of $\ell$.
\end{remark}

The following result from \cite{Szego} will be useful to calculate $\theta_{n,k,\ell}$.
\begin{lemma}\label{Szego result}
Let $\uom\in\mS^{m-1}$ and $m\geq 3$, then
\begin{align*}
\frac{1}{A_m A_{m-1}}\int_{\mS^{m-1}}\int_{\mS^{m-2}} \langle\uom,\uta\rangle^{j}\langle\uom,\utd\rangle^{j}dS(\us)dS(\ut) &= \frac{(-1)^{j}A_{m-2}\pi}{A_m} \frac{\Gamma(\frac{m}{2}-1)\Gamma(j+1)}{\Gamma(\frac{m}{2}+j)}\\
&=(-1)^{j}\frac{\Gamma(\frac{m}{2})\Gamma(j+1)}{\Gamma(\frac{m}{2}+j)}.
\end{align*}
\end{lemma}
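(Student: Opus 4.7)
My plan is to reduce the double integral over the Stiefel manifold of orthonormal pairs $(\ut,\us)$ to a single integral over the sphere by exploiting rotational invariance, and then evaluate the resulting sphere integral with a classical Beta function identity.

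First I would algebraically simplify the integrand. Since $\uta = \ut + i\us$ and $\utd = -\ut + i\us$, direct expansion gives
\[
\langle\uom,\uta\rangle\langle\uom,\utd\rangle = \bigl(\langle\uom,\ut\rangle + i\langle\uom,\us\rangle\bigr)\bigl(-\langle\uom,\ut\rangle + i\langle\uom,\us\rangle\bigr) = -\bigl(\langle\uom,\ut\rangle^2 + \langle\uom,\us\rangle^2\bigr),
\]
so $\langle\uom,\uta\rangle^{j}\langle\uom,\utd\rangle^{j} = (-1)^j \bigl(\langle\uom,\ut\rangle^2 + \langle\uom,\us\rangle^2\bigr)^j$, which already accounts for the sign $(-1)^j$ on the right-hand side. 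Geometrically, the bracket is the squared length of the projection of $\uom$ onto the $2$-plane spanned by the orthonormal pair $(\ut,\us)$.

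Next I would use rotational invariance to convert the Stiefel integral into a sphere integral. The measure $dS(\us)\,dS(\ut)$ is, up to the scalar $A_m A_{m-1}$, the unique $O(m)$-invariant probability measure on the Stiefel manifold $V_2(\mR^m)\cong O(m)/O(m-2)$. Inserting a Haar average over $R\in O(m)$ and substituting $(\ut,\us) \mapsto (R\ut, R\us)$ turns $\langle\uom, R\ut\rangle$ into $\langle R^{-1}\uom, \ut\rangle$; since $R^{-1}\uom$ is then uniformly distributed on $\mS^{m-1}$, the double integral collapses to
\[
\frac{1}{A_m A_{m-1}}\int_{\mS^{m-1}}\int_{\mS^{m-2}}\bigl(\langle\uom,\ut\rangle^2 + \langle\uom,\us\rangle^2\bigr)^j dS(\us)\,dS(\ut) = \frac{1}{A_m}\int_{\mS^{m-1}}(v_1^2+v_2^2)^j\,dS(\uv).
\]
On $\mS^{m-1}$ the variable $r = v_1^2+v_2^2$ has a $\textup{Beta}(1,(m-2)/2)$ distribution, so a direct Beta-function computation yields
\[
\frac{1}{A_m}\int_{\mS^{m-1}}(v_1^2+v_2^2)^j\,dS(\uv) = \frac{\Gamma(m/2)}{\Gamma((m-2)/2)}\int_0^1 r^j(1-r)^{(m-4)/2}\,dr = \frac{\Gamma(m/2)\Gamma(j+1)}{\Gamma(m/2+j)}.
\]
Multiplying by $(-1)^j$ gives the second form of the lemma, and the first form follows from the easy identity $A_{m-2}\pi/A_m = \Gamma(m/2)/\Gamma((m-2)/2)$.

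The main obstacle I anticipate is the rigorous justification of the reduction from the Stiefel integral to the sphere integral, since the inner sphere $\mS^{m-2}$ depends on $\ut$ and one must verify that the combined measure is genuinely the $O(m)$-invariant one on $V_2(\mR^m)$. This is standard but easy to obscure. As a backup, one can replace the invariance argument by a direct computation in spherical coordinates (parametrising $\ut$ by its angle with $\uom$ and $\us$ by its angle with the projection of $\uom$ onto $\ut^{\perp}$), which leads to the same answer via an iterated one-dimensional Beta integral, but with considerably more effort than the invariance route.
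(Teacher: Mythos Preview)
Your argument is correct. The algebraic identity $\langle\uom,\uta\rangle\langle\uom,\utd\rangle = -(\langle\uom,\ut\rangle^2+\langle\uom,\us\rangle^2)$ is right, the reduction by $O(m)$-invariance of the Stiefel measure to the sphere average $\frac{1}{A_m}\int_{\mS^{m-1}}(v_1^2+v_2^2)^j\,dS(\uv)$ is legitimate (your remark that the inner sphere depends on $\ut$ is exactly the point handled by recognising the total measure as the invariant one on $V_2(\mR^m)$), and the Beta-integral evaluation and the identity $A_{m-2}\pi/A_m=\Gamma(m/2)/\Gamma(m/2-1)$ are both correct.

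As for comparison with the paper: there is nothing to compare. The paper does not prove this lemma; it simply quotes it from \cite{Szego} (Colombo--Sabadini--Sommen) as an input used to compute the coefficients $\theta_{n,k,\ell}$. Your proof therefore supplies what the paper omits. The invariance route you chose is the natural one; the spherical-coordinates backup you mention would also work and is closer in spirit to how such integrals are often handled in the Clifford-analysis literature, but your approach is cleaner.
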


\begin{proposition}\label{prop coeffi}
The coefficients $\theta_{n,k,\ell}$ are given by:

\[
\theta_{n,k,\ell} = (-1)^{k+\ell} \frac{\Gamma(m-1)\Gamma\left(\frac{m}{2} + k-n-1\right)\Gamma\left(\frac{m}{2}+\ell-n-1\right)(m-2)}{4(\ell-n)!\Gamma\left(\frac{m}{2}+k+\ell-n\right)^2\Gamma(m+k+\ell-2n-2)}\left(\frac{k!\ell!}{n!}\right)^2 \frac{(k+\ell-2n)!}{(k-n)!}.
\]
\end{proposition}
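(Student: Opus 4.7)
The plan is to isolate each coefficient $\theta_{n,k,\ell}$ by applying the harmonic projection operator $\mbox{Proj}_{k+\ell,n}$ of Proposition~\ref{prop projection} in the variable $\ux$ to the identity of Corollary~\ref{cor}. As a polynomial in $\ux$, each summand $|\ux|^{2j}K_{m,k+\ell-2j}(\ux,\uy)|\uy|^{2j}$ already lies in the subspace $|\ux|^{2j}\cH_{k+\ell-2j}$ of the harmonic Fischer decomposition (\ref{eq::harmon fisch decomp}), so this projection annihilates every term with $j\neq n$, leaving
\[
\mbox{Proj}^{\ux}_{k+\ell,n}\bigl[L_{k,\ell}(\ux,\uy)\bigr]=\theta_{n,k,\ell}\,K_{m,k+\ell-2n}(\ux,\uy)\,|\uy|^{2n},
\]
where $L_{k,\ell}(\ux,\uy)$ denotes the scalar integral appearing on the left-hand side of Corollary~\ref{cor}.

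To compute the left-hand side I would expand the projection as $\sum_{j}\alpha_{j,k+\ell,n}|\ux|^{2j}\Delta_{\ux}^{j+n}$ and interchange the Laplacian with the Stiefel integral. Lemma~\ref{lemma laplacian} gives
\[
\Delta_{\ux}^{j+n}\bigl[\langle\ux,\uta\rangle^{k}\langle\ux,\utd\rangle^{\ell}\bigr]=(-4)^{j+n}\frac{k!\,\ell!}{(k-j-n)!\,(\ell-j-n)!}\,\langle\ux,\uta\rangle^{k-j-n}\langle\ux,\utd\rangle^{\ell-j-n},
\]
which survives precisely for $j\leq\ell-n$. I would then specialize $\ux=\uy=\uom\in\mS^{m-1}$: the factor $|\uy|^{2n}$ becomes $1$, the integrand collapses to $\langle\uom,\uta\rangle^{k+\ell-j-n}\langle\uom,\utd\rangle^{k+\ell-j-n}$, whose Stiefel integral is given in closed form by Lemma~\ref{Szego result}, and $K_{m,k+\ell-2n}(\uom,\uom)=\tfrac{2(k+\ell-2n)+m-2}{m-2}\binom{k+\ell-2n+m-3}{k+\ell-2n}$ follows from $C_{p}^{m/2-1}(1)=\binom{p+m-3}{p}$.

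Inserting the explicit coefficients $\alpha_{j,k+\ell,n}$ from Proposition~\ref{prop projection} and performing routine $\Gamma$-function bookkeeping, solving for $\theta_{n,k,\ell}$ reduces the statement to the evaluation of a single finite sum
\[
S=\sum_{j=0}^{\ell-n}\frac{(-1)^{j}(k+\ell-j-n)!}{j!\,(k-j-n)!\,(\ell-j-n)!}\cdot\frac{\Gamma\!\left(\tfrac{m}{2}+k+\ell-2n-j-1\right)}{\Gamma\!\left(\tfrac{m}{2}+k+\ell-j-n\right)};
\]
a direct comparison of prefactors shows that Proposition~\ref{prop coeffi} is equivalent to the identity
\[
S=\frac{(m-2)\,\Gamma\!\left(\tfrac{m}{2}+k-n-1\right)\Gamma\!\left(\tfrac{m}{2}+\ell-n-1\right) k!\,\ell!}{2\,\Gamma\!\left(\tfrac{m}{2}\right)(\ell-n)!\,(k-n)!\,n!\,\Gamma\!\left(\tfrac{m}{2}+k+\ell-n\right)}.
\]

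The main obstacle is precisely this closed-form summation. Rewriting the summand in terms of Pochhammer symbols recasts $S$ as a balanced (Saalsch\"utzian) ${}_{3}F_{2}(1)$, which is then evaluated by the Pfaff--Saalsch\"utz identity. A sanity check at the extreme case $n=\ell$ collapses $S$ to its single $j=0$ term $\tfrac{k!}{(k-\ell)!}\cdot\tfrac{\Gamma(\tfrac{m}{2}+k-\ell-1)}{\Gamma(\tfrac{m}{2}+k)}$, which does match the claimed formula; moreover the final answer is symmetric in $k$ and $\ell$, consistent with the identity $L_{k,\ell}(\ux,\uy)=L_{\ell,k}(\ux,\uy)$ obtained from the substitution $\ut\mapsto-\ut$ (which swaps $\uta\leftrightarrow\utd$) in the defining Stiefel integral.
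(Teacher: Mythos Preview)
Your proposal is correct and follows essentially the same route as the paper's own proof: apply the harmonic projection $\mbox{Proj}^{\ux}_{k+\ell,n}$ from Proposition~\ref{prop projection} to the identity of Corollary~\ref{cor}, use Lemma~\ref{lemma laplacian} to compute the iterated Laplacians, specialise to $\ux=\uy=\uom\in\mS^{m-1}$, evaluate the Stiefel integral via Lemma~\ref{Szego result} and $K_{m,k+\ell-2n}(\uom,\uom)$ via $C_p^{m/2-1}(1)$, and recognise the remaining sum $S$ as a balanced ${}_3F_2(1)$ evaluable by Pfaff--Saalsch\"utz. Your sum $S$ and its closed form coincide with the paper's (after rewriting $\Gamma(\tfrac{m}{2}-1)=\tfrac{2}{m-2}\Gamma(\tfrac{m}{2})$); the sanity check at $n=\ell$ and the $k\leftrightarrow\ell$ symmetry observation are nice additions not spelled out in the paper.
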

\begin{proof}
Let $L_{k,\ell}$ denote the left-hand side of the equation in Corollary \ref{cor} and assume $\ell\leq k$. By Lemma \ref{lemma laplacian}, applying $\Delta_{\ux}^{j}$ with $j\leq \ell$ yields

\begin{align*}
\Delta_{\ux}^{j} \left[L_{k,\ell}\right] =& (-4)^{j} \frac{k!}{(k-j)!}\frac{l!}{(\ell-j)!} \frac{1}{A_m A_{m-1}}\\
&\times\int_{\mS^{m-1}}\int_{\mS^{m-2}}\langle\ux,\uta\rangle^{k-j}\langle\ux,\utd\rangle^{\ell-j}\langle\uy,\uta\rangle^{\ell}\langle\uy,\utd\rangle^{k} dS(\us)dS(\ut)
\end{align*}
and $\Delta_{\ux}^{j}\left[L_{k,l}(\ux,\uy)\right] = 0$ when $j>l$. Hence by Proposition \ref{prop projection} we have

\begin{align*}
\sum_{j=0}^{l-n} \alpha_{j,k+\ell, n} |\ux|^{2j}\Delta_{\ux}^{j+n}\left[L_{k,\ell}(\ux,\uy)\right] = \theta_{n,k,\ell} K_{m,k+\ell-2n}(\ux,\uy)|\uy|^{2n}.
\end{align*}
As $\theta_{n,k,\ell}$ is the same for all $\ux,\uy$, it suffices to consider the case where $\ux=\uy=\uom\in\mS^{m-1}$. Using the fact that, see e.g. \cite{NIST},

\[
C_{r}^{\frac{m}{2}-1} (1) = \frac{\Gamma(m-2+r)}{\Gamma(m-2)\Gamma(r+1)}
\]
we get

\begin{align*}
\sum_{j=0}^{\ell-n} \alpha_{j, k+\ell, n}|\ux|^{2j} \Delta_{\ux}^{j+n}\left[L_{k,\ell}(\ux,\uy)\right]\vert_{\ux=\uy=\uom} = \theta_{n,k,\ell} \frac{2(k+\ell-2n)+m-2}{m-2}\frac{\Gamma(m-2+k+\ell-2n)}{\Gamma(m-2)\Gamma(k+\ell-2n+1)}.
\end{align*}
Now using Lemma \ref{Szego result} we get

\begin{align}\label{sum hua radon}
\sum_{j=0}^{\ell-n} \alpha_{j,k+l,n} |\ux|^{2j}\Delta_{\ux}^{j+n}\left[L_{k,\ell}(\ux,\uy)\right]\vert_{\ux=\uy=\uom} =& \sum_{j=0}^{\ell-n} \frac{(-1)^j (\frac{m}{2}+k+\ell-2n-1)}{4^{j+n}j!n!} \frac{\Gamma(\frac{m}{2}+k+\ell-2n-j-1)}{\Gamma(\frac{m}{2}+k+\ell-n)}\nonumber\\
& \times \frac{(-4)^{j+n}k!\ell!}{(k-j-n)(\ell-j-n)!} \frac{(-1)^{k+\ell-j-n} \Gamma(\frac{m}{2}) \Gamma(k+\ell-j-n+1)}{\Gamma(\frac{m}{2}+k+\ell-j-n)}\nonumber\\
=& \frac{(\frac{m}{2}+k+\ell-2n-1)\Gamma(\frac{m}{2})(-1)^{k+\ell}k!\ell!}{n!\Gamma(\frac{m}{2}+k+\ell-n)}\nonumber\\
&\times \sum_{j=0}^{\ell-n}(-1)^j\frac{\Gamma(\frac{m}{2}+k+\ell-2n-j-1)\Gamma(k+\ell-j-n+1)}{j!\Gamma(\frac{m}{2}+k+\ell-j-n)(k-j-n)!(\ell-j-n)!}.
\end{align}
The summation (\ref{sum hua radon}) is actually a multiple of a hypergeometric function ${}_3F_2$:

\begin{align*}
\sum_{j=0}^{\ell-n}(-1)^j&\frac{\Gamma(\frac{m}{2}+k+\ell-2n-j-1)\Gamma(k+\ell-j-n+1)}{j!\Gamma(\frac{m}{2}+k+\ell-j-n)(k-j-n)!(\ell-j-n)!} =\\
& \frac{\Gamma(\frac{m}{2}+k+\ell-2n-1)\Gamma(k+\ell-n+1)}{\Gamma(\frac{m}{2}+k+\ell-n)(k-n)!(\ell-n)!}{}_3F_2([-p,a,b],[c,d];1)
\end{align*}
where

\begin{align*}
p &= \ell-n,\\
a &= -\frac{m}{2}-k-\ell+n+1,\\
b &= -k+n,\\
c &= -k-\ell+n,\\
d &= -\frac{m}{2}-k-\ell+2n+2,\\
  &= a+b-c+1-p.
\end{align*}
Using the Pfaff-Saalsch\"utz Balanced sum, see \cite{NIST}, we find

\begin{align*}
{}_3F_2([-p,a,b],[c,d];1) &= \frac{(c-a)_p(c-b)_p}{(c)_p(c-a-b)_p}\\
&= \frac{(\frac{m}{2}-1)_{\ell-n}(-\ell)_{\ell-n}}{(-k-\ell+n)_{\ell-n}(\frac{m}{2}+k-n-1)_{\ell-n}}\\
&= \frac{\Gamma(\frac{m}{2}+\ell-n-1)}{\Gamma(\frac{m}{2}-1)}\frac{\Gamma(\frac{m}{2}+k-n-1)}{\Gamma(\frac{m}{2}+k+\ell-2n-1)}\frac{\ell!}{n!}\frac{k!}{(k+\ell-n)!}.
\end{align*}
Now combining everything yields the result stated in the theorem. The case $k\leq \ell$ is the same as interchanging $k \leftrightarrow \ell$ in the above calculations. Note that due to the symmetry of $\theta_{n,k,l}$ with respect to $k$ and $\ell$, the result does not change.
\end{proof}

\subsection{The inversion}
We now have the necessary tools to find an inversion formula for the Hua-Radon transform.
\begin{theorem}\label{inversion Hua Radon}
Let $f\in\mathcal{OL}^2(LB(0,1))$ be of the form

\[
f(\uz) = \uz^{2n} H_{t-2n}(\uz)
\]
where $H_{t-2n}\in\cH_{t-2n}(\mC^m)$ is a spherical harmonic of degree $t-2n \geq 0$. Then

\[
\tilde{R}[\mathcal{H}_{\uta}[f]](\uz) = \varphi_{t,n} f(\uz)
\]
with

\begin{align*}
\varphi_{t,n} =&\dfrac{\Gamma(t+\frac{m}{2})\Gamma(m-1)(t-n)!^2\Gamma(t-2n+\frac{m}{2}-1)}{2 t! \Gamma(\frac{m}{2}+t-n)^2 \Gamma(m+t-2n-2)}\\
&\times {}_4F_3([n+1,n+1,2n-t,\frac{m}{2}-1],[n-t,n-t,2n-t-\frac{m}{2}+2];1).
\end{align*}
\end{theorem}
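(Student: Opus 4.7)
The plan is to exchange the order of integration and reduce the identity to a single scalar sum which is then recognised as a ${}_4F_3$. First I would apply Fubini to write
\[
\tilde{R}[\mathcal{H}_{\uta}[f]](\uz) = \int_{\mS^{m-1}}\int_{0}^{\pi} \tilde{R}\bigl[\cK_{\uta}(\uz, e^{-i\theta}\uom)\bigr]\, f(e^{i\theta}\uom)\, d\theta\, dS(\uom),
\]
with $\tilde{R}$ acting only on $\uta$. The series expansion of $\cK_{\uta}$ reduces the task to computing $\tilde{R}[a^{s-k}b^k]$ term by term, and Corollary \ref{cor} gives
\[
\tilde{R}[a^{s-k}b^k] = \sum_{j=0}^{\min(k,s-k)} \theta_{j,s-k,k}\, |\uz|^{2j}\, K_{m,s-2j}(\uz, e^{-i\theta}\uom)\, |e^{-i\theta}\uom|^{2j}.
\]
Combining the degree-$p$ homogeneity $K_{m,p}(\uz, e^{-i\theta}\uom) = e^{-ip\theta}K_{m,p}(\uz,\uom)$ with $|e^{-i\theta}\uom|^{2j} = e^{-2ij\theta}$ concentrates the entire $\theta$-dependence of the kernel into the single factor $e^{-is\theta}$.

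Next I would use the $(t-2n)$-homogeneity of $H_{t-2n}$ and $(e^{i\theta}\uom)^{2n} = (-1)^n e^{2in\theta}$ to rewrite $f(e^{i\theta}\uom) = (-1)^n e^{it\theta} H_{t-2n}(\uom)$, and then separate the two integrations. The $\theta$-integral $\int_{0}^{\pi} e^{i(t-s)\theta}\,d\theta$ equals $\pi$ when $s=t$ and vanishes for $s \neq t$ with $s-t$ even, while the reproducing property (\ref{eq::repr harm}) gives
\[
\frac{1}{A_m}\int_{\mS^{m-1}} K_{m,s-2j}(\uz,\uom)\, H_{t-2n}(\uom)\,dS(\uom) = \delta_{s-2j,\, t-2n}\, H_{t-2n}(\uz).
\]
The Kronecker delta enforces $s \equiv t \pmod 2$, so the $\theta$-contributions from odd $s-t$ are annihilated by the $\uom$-integral and only the single term $s=t$, $j=n$ survives. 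The range $0 \le j \le \min(k,s-k)$ then restricts $k$ to $n \le k \le t-n$, and the identity $|\uz|^{2n} H_{t-2n}(\uz) = (-1)^n f(\uz)$ collapses the result to
\[
\varphi_{t,n} = (-1)^t\, \frac{\Gamma(t+m/2)}{\Gamma(t+1)\,\Gamma(m/2)}\, \sum_{k=n}^{t-n} \theta_{n,\, t-k,\, k}.
\]

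The main obstacle is the final closed-form evaluation of this single sum. Substituting Proposition \ref{prop coeffi} for $\theta_{n,t-k,k}$, shifting $k = n+p$ with $0 \le p \le t-2n$, and using $\Gamma(a-p)/\Gamma(a) = (-1)^p/(1-a)_p$, each gamma quotient must be reorganised into Pochhammer symbols in $p$: the block $(n+p)!^2/n!^2$ yields $(n+1)_p^2$, the ratio $(t-2n)!/(t-2n-p)!$ gives $(-1)^p(2n-t)_p$, $\Gamma(m/2+p-1)/\Gamma(m/2-1)$ produces $(m/2-1)_p$, $(t-n-p)!^2/(t-n)!^2$ furnishes $(n-t)_p^{-2}$, and the ratio involving $\Gamma(m/2+t-2n-1-p)$ contributes $(-1)^p/(2n-t-m/2+2)_p$. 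The two signs $(-1)^p$ cancel, and extracting the $p=0$ term as an overall prefactor identifies the remainder as ${}_4F_3([n+1,n+1,2n-t,m/2-1],[n-t,n-t,2n-t-m/2+2];1)$. Together with the simplification $(m-2)\Gamma(m/2-1)/\Gamma(m/2) = 2$, this reproduces precisely the $\varphi_{t,n}$ in the statement. Unlike in Proposition \ref{prop coeffi}, no Pfaff--Saalsch\"utz collapse is available for this balanced-looking sum, so the ${}_4F_3$ is genuine.
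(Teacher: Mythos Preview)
Your proposal is correct and follows essentially the same route as the paper: apply the dual transform termwise to $\cK_{\uta}$ via Corollary~\ref{cor}, use homogeneity to isolate the $\theta$-integral, combine the reproducing property of $K_{m,\cdot}$ with the $\theta$-orthogonality to force $s=t$ and $j=n$, and then recognise $\sum_{k=n}^{t-n}\theta_{n,t-k,k}$ as a ${}_4F_3$. Your treatment is in fact slightly more careful than the paper's in two places---you explicitly note that odd $s-t$ survives the $\theta$-integral but is killed by the spherical reproducing property, and you spell out the Pochhammer reorganisation that identifies the hypergeometric series---whereas the paper simply asserts these steps.
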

\begin{proof}
We can write the kernel of the Hua-Radon transform as

\[
\cK_{\uta}(\uz, e^{-i\theta} \uom) = \frac{1}{\pi A_{m}} \sum_{s=0}^{\infty} \sum_{k=0}^s (-1)^s \frac{\Gamma(s+\frac{m}{2})}{\Gamma(\frac{m}{2})\Gamma(s+1)}a^{s-k} b^{k}
\]
where $a = \langle \uz,\uta\rangle\langle e^{-i\theta} \uom,\utd\rangle$ and $b = \langle \uz,\utd\rangle\langle e^{-i\theta} \uom,\uta\rangle$. Thus using Corollary \ref{cor} we have

\begin{align}\label{Dual Hua-Radon}
\tilde{R}[\mathcal{H}_{\uta}[f]](\uz) =& \frac{1}{\pi A_{m}^2 A_{m-1}} \sum_{s=0}^{\infty} \sum_{k=0}^s (-1)^s \frac{\Gamma(s+\frac{m}{2})}{\Gamma(\frac{m}{2})\Gamma(s+1)} \nonumber\\
&\times \int_{\mS^{m-1}}\int_{\mS^{m-2}}\int_{\mS^{m-1}}\int_0^{\pi} a^{s-k} b^{k} (e^{i\theta}\uom)^{2n}H_{t-2n}(e^{i\theta}\uom) d\theta dS(\uom)dS(\us)dS(\ut)\nonumber\\
=&\frac{1}{\pi A_{m}} \sum_{s=0}^{\infty} \sum_{k=0}^s \sum_{n'=0}^{\min\{k,s-k\}} \theta_{n',s-k,k} (-1)^s \frac{\Gamma(s+\frac{m}{2})}{\Gamma(\frac{m}{2})\Gamma(s+1)}\\
&\times \int_{\mS^{m-1}}\int_0^{\pi} \uz^{2n'}K_{m,s-2n'}(\uz,e^{-i\theta}\uom)(e^{-i\theta}\uom)^{2n'}  (e^{i\theta}\uom)^{2n}H_{t-2n}(e^{i\theta}\uom) d\theta dS(\uom).\nonumber
\end{align}
If we now use the homogeneity of the zonal spherical harmonics and of $H_{t-2n}$, we can rewrite the last integral in (\ref{Dual Hua-Radon}) as follows

\begin{align*}
&\left(\int_0^{\pi} e^{i\theta(t-s)} d\theta\right)\int_{\mS^{m-1}} \uz^{2n'}K_{m,s-2n'}(\uz,\uom)(\uom)^{2(n'+n)}H_{t-2n}(\uom)  dS(\uom)\\
=&\left(\int_0^{\pi} e^{i\theta(t-s)} d\theta\right)\int_{\mS^{m-1}} (-1)^{n'+n}\uz^{2n'}K_{m,s-2n'}(\uz,\uom)H_{t-2n}(\uom)  dS(\uom).
\end{align*}
Therefore, if we use the reproducing property of the zonal spherical harmonic, we get that the above integral vanishes except when $s-2n' = t-2n$. But this means that $2n-2n' = t-s$ and thus, by looking at the integral over $\theta$, we get that the only non-vanishing integral occurs when $n'=n$ and consequently $s=t$. As $n=n'\leq \min\{k,t-k\}$, we have $k\geq n$ and $k\leq t-n$. So now (\ref{Dual Hua-Radon}) becomes
\begin{align*}
\tilde{R}[\mathcal{H}_{\uta}[f]](\uz) &= f(\uz)\sum_{k=n}^{t-n} \theta_{n,t-k,k} (-1)^t \frac{\Gamma(t+\frac{m}{2})}{\Gamma(\frac{m}{2})\Gamma(t+1)}\\
&= f(\uz)\psi_{t,n} \sum_{k=n}^{t-n} \xi_{n,t-k,k}  
\end{align*}
where

\begin{align*}
\psi_{t,n} &= \frac{\Gamma(t+\frac{m}{2})}{\Gamma(\frac{m}{2})t!}\frac{(m-2)\Gamma(m-1)\Gamma(t-2n+1)}{4\Gamma(\frac{m}{2}+t-n)^2\Gamma(m+t-2n-2)(n!)^2}\\
\xi_{n,t-k,k} &= \frac{((t-k)!)^2(k!)^2 \Gamma(t-k-n+\frac{m}{2}-1)\Gamma(k-n+\frac{m}{2}-1)}{(k-n)!(t-k-n)!}.
\end{align*}
We can rewrite this last summation as a multiple of a hypergeometric function:

\begin{align*}
\sum_{k=n}^{t-n} \xi_{n,t-k,k} =& \dfrac{(t-n)!^2n!^2\Gamma(t-2n+\frac{m}{2}-1)\Gamma(\frac{m}{2}-1)}{(t-2n)!}\\
&\times {}_4F_3([n+1,n+1,2n-t,\frac{m}{2}-1],[n-t,n-t,2n-t-\frac{m}{2}+2];1).
\end{align*}
Hence if we combine everything we get

\begin{align*}
\varphi_{t,n} =&\dfrac{\Gamma(t+\frac{m}{2})\Gamma(m-1)(t-n)!^2\Gamma(t-2n+\frac{m}{2}-1)}{2 t! \Gamma(\frac{m}{2}+t-n)^2 \Gamma(m+t-2n-2)}\\
&\times {}_4F_3([n+1,n+1,2n-t,\frac{m}{2}-1],[n-t,n-t,2n-t-\frac{m}{2}+2];1).
\end{align*}
\end{proof}

We can now use the fact that $M_t(\uz)$ and $\uz M_{t-1}(\uz)$ are both spherical harmonics, where $M_t(\uz)\in\cM_t(\mC^m), M_{t-1}(\uz)\in\cM_{t-1}(\mC^m)$ are spherical monogenics, to rewrite the result of Theorem \ref{inversion Hua Radon} as

\begin{align}
\tilde{R}[\mathcal{H}_{\uta}[f]](\uz) &= \varphi_{t,n} f(\uz) & f(\uz) &=\uz^{2n} M_{t-2n}(\uz), \label{even power}\\
\tilde{R}[\mathcal{H}_{\uta}[h]](\uz) &= \varphi_{t,n} h(\uz) & h(\uz) &=\uz^{2n+1} M_{t-2n-1}(\uz).\label{odd power}
\end{align}
In doing so we can use 

\begin{align}
\mE_{\uz}\left[\uz^{j} M_{t-j}(\uz)\right] &= t\uz^{j} M_{t-j}(\uz),\nonumber\\
\Gamma_{\uz} \left[\uz^{2n} M_{t-2n}(\uz)\right] &= (2n-t)\uz^{2n} M_{t-2n}(\uz),\label{eq::Euler,Gamma}\\
\Gamma_{\uz} \left[\uz^{2n+1} M_{t-2n-1}(\uz)\right] &= (t-2n+m-2)\uz^{2n+1} M_{t-2n-1}(\uz).\nonumber
\end{align}
Thus we can write $\varphi_{t,n} = \varphi_{\mathbb{E}_{\uz},\frac{\Gamma_{\uz} + \mE_{\uz}}{2}}$ in case (\ref{even power}) and $\varphi_{t,n} = \varphi_{\mathbb{E}_{\uz},\frac{\mE_{\uz}-\Gamma_{\uz}+m-2}{2}}$ in case (\ref{odd power}) so that it becomes an operator. Hence we have

\begin{align*}
\varphi_{\mathbb{E}_{\uz},\frac{\Gamma_{\uz} + \mE_{\uz}}{2}}^{-1}\tilde{R}[\mathcal{H}_{\uta}[g]](\uz) &= g(\uz) & g(\uz) &=\uz^{2n} M_{t-2n}(\uz), \\
\varphi_{\mathbb{E}_{\uz},\frac{\mE_{\uz}-\Gamma_{\uz}+m-2}{2}}^{-1}\tilde{R}[\mathcal{H}_{\uta}[h]](\uz) &= h(\uz) & h(\uz) &=\uz^{2n+1} M_{t-2n-1}(\uz).
\end{align*}
Since the Hua-Radon transform is defined for holomorphic functions, we can use (\ref{eq::Almansi}), so that we can invert it on any holomorphic function.

\begin{theorem}\label{inversion Hua-Radon}
Let $f(\uz) = \sum_{k=0}^{\infty} M_k(\uz) + \uz\sum_{\ell = 0}^{\infty} N_\ell(\uz) \in\mathcal{OL}^2(LB(0,1))$, with $M_k \in\cM_k(\mC^m)$, $N_{\ell}\in\cM_{\ell}(\mC^m)$, then
\[
f(\uz) = \varphi_{\mathbb{E}_{\uz},\frac{\Gamma_{\uz} + \mE_{\uz}}{2}}^{-1}\tilde{R}\left[\mathcal{H}_{\uta}\left[\sum_{k=0}^{\infty} M_k(\uz')\right]\right](\uz) + \varphi_{\mathbb{E}_{\uz},\frac{\mE_{\uz}-\Gamma_{\uz}+m-2}{2}}^{-1}\tilde{R}\left[\mathcal{H}_{\uta}\left[\uz'\sum_{\ell = 0}^{\infty} N_\ell(\uz')\right]\right] (\uz).
\]
\end{theorem}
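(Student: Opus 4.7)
The plan is to reduce to the two scalar identities (\ref{even power}) and (\ref{odd power}) already established for the special form $\uz^{2n} H_{t-2n}(\uz)$, using the Almansi decomposition (\ref{eq::Almansi}) as the bridge. The key observation is that the Almansi decomposition for holomorphic functions over the Lie sphere produces only terms of the two shapes $M_k(\uz)$ and $\uz N_\ell(\uz)$, both of which are eigenfunctions of $\tilde R \circ \mathcal H_{\uta}$ corresponding to $n = 0$ in the scalar identities above. The only remaining work is to rewrite the resulting scalar eigenvalues as values of the stated operators on these eigenspaces and then invert termwise.

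Concretely, I would start from
\[
f(\uz) = \sum_{k=0}^{\infty} M_k(\uz) + \uz\sum_{\ell=0}^{\infty} N_\ell(\uz), \qquad M_k \in \cM_k(\mC^m),\ N_\ell \in \cM_\ell(\mC^m),
\]
and use $\mC_m$-linearity of $\mathcal H_{\uta}$ and $\tilde R$ to apply these operators termwise (the interchange with the infinite sums being justified by continuity of $\mathcal H_{\uta}$ as an orthogonal projection on $\mathcal{OL}^2(LB(0,1))$ and by convergence of the Almansi series in that norm). Each $M_k(\uz)$ matches the even-case pattern with $n=0$, $t=k$, so (\ref{even power}) yields $\tilde R[\mathcal H_{\uta}[M_k]](\uz) = \varphi_{k,0}\, M_k(\uz)$. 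Each $\uz N_\ell(\uz)$ matches the odd-case pattern with $n=0$, $t=\ell+1$, so (\ref{odd power}) yields $\tilde R[\mathcal H_{\uta}[\uz N_\ell]](\uz) = \varphi_{\ell+1,0}\, \uz N_\ell(\uz)$.

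To pass from scalar multipliers to operator form, I would invoke (\ref{eq::Euler,Gamma}). On $M_k$, $\mE_{\uz}$ has eigenvalue $k$ and $\Gamma_{\uz}$ has eigenvalue $-k$, so $(\Gamma_{\uz}+\mE_{\uz})/2$ has eigenvalue $0$; hence $\varphi_{k,0} = \varphi_{\mE_{\uz},(\Gamma_{\uz}+\mE_{\uz})/2}$ in restriction to $\cM_k(\mC^m)$. On $\uz N_\ell$, $\mE_{\uz}$ has eigenvalue $\ell+1$ and $\Gamma_{\uz}$ has eigenvalue $\ell+m-1$, so $(\mE_{\uz}-\Gamma_{\uz}+m-2)/2$ has eigenvalue $0$; hence $\varphi_{\ell+1,0} = \varphi_{\mE_{\uz},(\mE_{\uz}-\Gamma_{\uz}+m-2)/2}$ in restriction to $\uz\,\cM_\ell(\mC^m)$. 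The two operators are thus simultaneously diagonal on the respective Almansi components, so their formal inverses act by multiplication by $\varphi_{t,n}^{-1}$ on each eigenspace, and termwise application recovers $f$.

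The main obstacle will be the analytic justification of the termwise manipulations, together with the well-definedness of $\varphi^{-1}$. For the former, the continuity of the projection $\mathcal H_{\uta}$, the integral form of $\tilde R$, and the $\mathcal{OL}^2$-convergence of the Almansi decomposition ((\ref{eq::Almansi}), cf.\ \cite{Spherical and analytic, Lie}) together with the reproducing properties in Remark \ref{remark::reproducing properties} allow one to pass the sums through. For the latter, one checks $\varphi_{t,n} \neq 0$ for all admissible $(t,n)$ using the explicit hypergeometric expression derived in Theorem \ref{inversion Hua Radon}, so that the operators $\varphi_{\mE_{\uz},(\Gamma_{\uz}+\mE_{\uz})/2}$ and $\varphi_{\mE_{\uz},(\mE_{\uz}-\Gamma_{\uz}+m-2)/2}$ are invertible on each Almansi summand. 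Once these points are in place, the claimed inversion formula follows immediately by summing the inverted termwise identities.
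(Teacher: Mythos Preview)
Your proposal is correct and follows essentially the same route as the paper: the paper itself does not give a separate proof for this theorem but derives it in the paragraph immediately preceding the statement, by applying (\ref{even power}) and (\ref{odd power}) with $n=0$ to each Almansi summand, rewriting $\varphi_{t,n}$ via the eigenvalue relations (\ref{eq::Euler,Gamma}), and then summing. Your treatment is in fact somewhat more thorough than the paper's, which does not explicitly address the termwise justification or the nonvanishing of $\varphi_{t,n}$.
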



\section{Inversion of the polarized Hua-Radon transform}\label{section::polarized}
\setcounter{equation}{0}
\subsection{The polarized Hua-Radon transform}
In this section we determine the inversion of the polarized Hua-Radon transform which was defined in \cite{radonlie} as follows.\\
Let $\uta = \ut + i\us$ with $\ut,\us\in\mS^{m-1}$, $\langle\ut,\us\rangle = 0$ and let
\begin{align*}
\psi_{\uta,2r,k}(\uz) &= \uta\langle\uz, \uta\rangle^{r+k}\langle\uz, \utd\rangle^{r} = \uta f_{\uta,r+k,r}(\uz),\\
\psi_{\uta,2r+1,k}(\uz) &= \utd\uta\langle\uz, \uta\rangle^{r+k+1}\langle\uz, \utd\rangle^{r} = \utd\uta f_{\uta,r+k+1,r}(\uz).
\end{align*}
One can show that the functions $\psi_{\uta, \alpha,k}(\uz)$ are null-solutions of $\upz^{\alpha+1}$ (see \cite{radonlie}).\\
For $s\in\mN$, we define the right $\mC_m$-submodule $\mathfrak{M}^s(\uta)$ of $\mathcal{OL}^2(LB(0,1))$ to be the completion of the space generated by $\{\psi_{\uta,s,k}(\uz)\mid k\in\mN\}$. The spaces $\mathfrak{M}^s(\uta)$ are orthogonal with respect to $\langle\cdot, \cdot \rangle_{\mathcal{OL}^2}$. The space $\mathfrak{M}(\uta)$ is defined as the direct orthogonal sum $\oplus_{s=0}^{\infty} \mathfrak{M}^s(\uta)$ (see \cite{radonlie}).
\begin{remark}
The space $\mathfrak{M}(\uta)$ was constructed in such a way that we can decompose $\mathcal{OL}^2(\uta)$ into
\[
\mathcal{OL}^2(\uta) = \mathfrak{M}(\uta) \oplus \mathfrak{M}(\utd)
\]
\end{remark}
The polarized Hua-Radon transform is defined as the orthogonal projection
\begin{align*}
\cR^H_{\uta}:&\mathcal{OL}^2(LB(0,1))\to \mathfrak{M}(\uta):\\
&f \mapsto \int_{\mS^{m-1}}\int_{0}^{\pi} L_{\uta}(\uz, e^{-i\theta}\uom) f(e^{i\theta}\uom) d\theta dS(\uom)
\end{align*}
with $L_{\uta}(\uz, e^{-i\theta}\uom)$ given by
\begin{align}
& \frac{1}{\pi A_m}\sum_{s=0}^{\infty}\sum_{k=0}^{\infty} (-1)^k \frac{\Gamma(k+2s+\frac{m}{2})}{\Gamma(k+2s+1)\Gamma(\frac{m}{2})}a^{k+s}b^{s} \label{kern sum 1}\\
&- \frac{1}{\pi A_m}\frac{\utd\uta}{4}\sum_{s=0}^\infty \frac{\Gamma(2s+\frac{m}{2})}{\Gamma(2s+1)\Gamma(\frac{m}{2})}a^s b^s \label{kern sum 2}
\end{align}
where $a=\langle\uz,\uta\rangle\langle e^{-i\theta}\uom,\utd\rangle$,  $b=\langle\uz,\utd\rangle\langle e^{-i\theta}\uom,\uta\rangle$. The functions $\psi_{\uta, \alpha, k}(\uz)$ are reproduced by the kernel $L_{\uta}(\uz, e^{-i\theta}\uom)$.

\subsection{The dual transform of the kernel}
In order to invert the polarized Hua-Radon transform, we will integrate its kernel over a Stiefel manifold aiming to get a linear combination of the zonal spherical monogenics. Recall that we are trying to use the reproducing properties of the zonal spherical monogenics to find an inversion formula for the polarized Hua-Radon transform. These reproducing properties also hold when working over the Lie sphere, see Remark \ref{remark::reproducing properties}. Since this is just a complexified version of the zonal spherical monogenics, it suffices to consider the kernel evaluated in real variables and show equality in the real case.\\
In order to evaluate the action of the dual Radon transform of (\ref{kern sum 1}), we can use the Corollary \ref{cor}, obtained for the Hua-Radon transform in Section \ref{Section::Hua}, and Proposition \ref{harmonics to monogenics}.
For (\ref{kern sum 2}), we will consider the action of the dual Radon transform on each term independently. We have for a term $a^s b^s$
\begin{equation}\label{eq::int Stiefel polarized}
\frac{1}{A_m A_{m-1}}\int_{\mS^{m-1}}\int_{\mS^{m-2}} \langle\ux,\uta\rangle^{k}\langle\ux,\utd\rangle^{k}\langle\uy,\uta\rangle^{k}\langle\uy,\utd\rangle^{k}\utd\uta dS(\us)dS(\ut)
\end{equation}
where $\mS^{m-2}$ is the unit sphere perpendicular to $\ut$. Now consider $\langle\ux,\uta\rangle^{k}\langle\ux,\utd\rangle^{k}\langle\uy,\uta\rangle^{k}\langle\uy,\utd\rangle^{k}\utd\uta$ as a function $f$ of the variable $\us$. As $\utd\uta = 2-i\ut\us$, we can write $f(\us) = f_0(\us)+f_2(\us)$, where
\[
f_0(\us) = 2\langle\ux,\uta\rangle^{k}\langle\ux,\utd\rangle^{k}\langle\uy,\uta\rangle^{k}\langle\uy,\utd\rangle^{k}
\]
is the scalar part of $f(\us)$ and
\[
f_2(\us) = -2i\langle\ux,\uta\rangle^{k}\langle\ux,\utd\rangle^{k}\langle\uy,\uta\rangle^{k}\langle\uy,\utd\rangle^{k}\ut\us 
\]
is its bivector part. We have the following:

\begin{align*}
f_2(-\us) &= -2i\langle\ux,\ut-i\us\rangle^{k}\langle\ux,-\ut-i\us\rangle^{k}\langle\uy,\ut-i\us\rangle^{k}\langle\uy,-\ut-i\us\rangle^{k}\ut(-\us)\\
&=(-1)^{4k+2}2i\langle\ux,-\ut+i\us\rangle^{k}\langle\ux,\ut+i\us\rangle^{k}\langle\uy,-\ut+i\us\rangle^{k}\langle\uy,\ut+i\us\rangle^{k}\ut\us\\
&=-f_2(\us).
\end{align*}
Hence $f_2(\us)$ is an odd function, therefore 
\[
\int_{\mS^{m-2}} f_2(\us) dS(\us) = 0.
\]
Consequently we have
\begin{align*}
\frac{1}{A_m A_{m-1}}\int_{\mS^{m-1}}\int_{\mS^{m-2}}& \langle\ux,\uta\rangle^{k}\langle\ux,\utd\rangle^{k}\langle\uy,\uta\rangle^{k}\langle\uy,\utd\rangle^{k}\utd\uta dS(\us)dS(\ut)\\
 &= \frac{2}{A_m A_{m-1}}\int_{\mS^{m-1}}\int_{\mS^{m-2}} \langle\ux,\uta\rangle^{k}\langle\ux,\utd\rangle^{k}\langle\uy,\uta\rangle^{k}\langle\uy,\utd\rangle^{k}dS(\us)dS(\ut)\\
&= 2L_{k,k}(\ux,\uy)
\end{align*}
with $L_{k,k}(\ux,\uy)$ the integral defined in Corollary \ref{cor}. Thus for (\ref{kern sum 2}) we have the following result:
\begin{proposition}\label{prop coeffi vartheta}
The dual Radon transform of a term of (\ref{kern sum 2}) is a linear combination of the zonal spherical monogenics, i.e.
\begin{align*}
\frac{1}{A_m A_{m-1}}\int_{\mS^{m-1}}\int_{\mS^{m-2}} \langle\ux,\uta\rangle^{k}\langle\ux,\utd\rangle^{k}&\langle\uy,\uta\rangle^{k}\langle\uy,\utd\rangle^{k}\utd\uta dS(\us)dS(\ut)\\
=& \sum_{j=0}^{2k} \vartheta_{j,k} \ux^{j} \cC_{m,2k-j}(\ux,\uy)\uy^{j}
\end{align*}
where
\begin{align*}
\vartheta_{2j,k} &= 2\theta_{j,k,k}\\
&= \frac{(m-2)(k!)^4 (2k-2n)! \Gamma(m-1)\Gamma\left(\frac{m}{2}+k-n-1\right)^2}{2((k-n)!)^2(n!)^2\Gamma\left(\frac{m}{2}+2k-n\right)^2 \Gamma(2k-2n+m-2)}\\
\vartheta_{2j+1,k} &= -\vartheta_{2j,k}.
\end{align*}
\end{proposition}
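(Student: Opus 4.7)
The crucial preparation is already completed in the excerpt above the statement: the bivector part $f_2(\us)$ of the integrand integrates to zero by an odd-parity argument in $\us$, leaving
\[
\frac{1}{A_m A_{m-1}}\int_{\mS^{m-1}}\!\!\int_{\mS^{m-2}} \langle\ux,\uta\rangle^{k}\langle\ux,\utd\rangle^{k}\langle\uy,\uta\rangle^{k}\langle\uy,\utd\rangle^{k}\utd\uta\, dS(\us)dS(\ut) = 2L_{k,k}(\ux,\uy).
\]
So the plan is to take this as our starting point and rewrite $2L_{k,k}(\ux,\uy)$ in the desired form. By Corollary \ref{cor} with $\ell=k$ we already know
\[
L_{k,k}(\ux,\uy) = \sum_{j=0}^{k} \theta_{j,k,k}\,|\ux|^{2j} K_{m,2k-2j}(\ux,\uy)\,|\uy|^{2j},
\]
so the task reduces to converting this expansion in zonal harmonics into an expansion in zonal monogenics multiplied by $\ux^{j}$ and $\uy^{j}$.

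The first technical step is to replace $|\ux|^{2j}$ and $|\uy|^{2j}$ by even powers of the Clifford vector variables. Since $\ux^{2}=-|\ux|^{2}$ (and likewise for $\uy$), and since $K_{m,2k-2j}$ is scalar valued and thus commutes with everything, I would rewrite
\[
|\ux|^{2j} K_{m,2k-2j}(\ux,\uy)\,|\uy|^{2j} = (-1)^{j}\ux^{2j} K_{m,2k-2j}(\ux,\uy)(-1)^{j}\uy^{2j} = \ux^{2j}\,K_{m,2k-2j}(\ux,\uy)\,\uy^{2j},
\]
which already puts even-index monomials $\ux^{2j}\cdots\uy^{2j}$ in the right shape.

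Next I would apply the identity from Remark \ref{rem harmon to mono zonal},
\[
K_{m,2k-2j}(\ux,\uy) = \cC_{m,2k-2j}(\ux,\uy) - \ux\,\cC_{m,2k-2j-1}(\ux,\uy)\,\uy,
\]
interpreted with the convention $\cC_{m,-1}\equiv 0$ so that the boundary case $j=k$ (i.e.\ $K_{m,0}=1=\cC_{m,0}$) is covered. Substituting termwise gives
\[
\ux^{2j}K_{m,2k-2j}(\ux,\uy)\uy^{2j} = \ux^{2j}\cC_{m,2k-2j}(\ux,\uy)\uy^{2j} - \ux^{2j+1}\cC_{m,2k-2j-1}(\ux,\uy)\uy^{2j+1},
\]
so each harmonic term produces exactly one even-index and one odd-index monogenic term, with opposite signs and the same scalar coefficient $2\theta_{j,k,k}$. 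Re-indexing by $j$ running from $0$ to $2k$ then yields the stated sum with
\[
\vartheta_{2j,k} = 2\theta_{j,k,k}, \qquad \vartheta_{2j+1,k} = -2\theta_{j,k,k} = -\vartheta_{2j,k}.
\]
Finally, $\vartheta_{2j,k}$ is made explicit by inserting the formula for $\theta_{n,k,\ell}$ from Proposition \ref{prop coeffi} with $\ell=k$, $n=j$, which collapses the factorial expressions into the closed form written in the statement. The only step requiring care is bookkeeping the sign after commuting scalars past the Clifford products and handling the $j=k$ boundary via the $\cC_{m,-1}=0$ convention; the computation itself is purely algebraic rearrangement, with all analytic content already encapsulated in Corollary \ref{cor} and Remark \ref{rem harmon to mono zonal}.
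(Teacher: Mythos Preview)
Your proposal is correct and follows exactly the route the paper intends: the preparatory computation reduces the integral to $2L_{k,k}(\ux,\uy)$, and then Corollary \ref{cor} together with the identity $K_{m,2k-2j}=\cC_{m,2k-2j}-\ux\,\cC_{m,2k-2j-1}\,\uy$ from Remark \ref{rem harmon to mono zonal} converts the zonal harmonic expansion into the stated zonal monogenic expansion with $\vartheta_{2j,k}=2\theta_{j,k,k}$ and $\vartheta_{2j+1,k}=-\vartheta_{2j,k}$. The paper in fact gives no separate proof for this proposition, so your write-up simply makes explicit the algebra that the paper leaves to the reader.
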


\subsection{The inversion}
Finally, we can use the reproducing properties of the zonal spherical monogenics to find an inversion formula.

\begin{proposition}\label{prop::reproducing even case}
Let $M_{t-2n}\in \cM_{t-2n}(\mC^m) $, $t,n\in\mN$ and $t-2n\geq 0$. Then

\[
\tilde{R}[\cR^H_{\uta}[\uz^{2n} M_{t-2n}(\uz)] = \rho_{t,2n} \uz^{2n} M_{t-2n}(\uz)
\]
where

\[
\rho_{t,2n} = \left\{\begin{array}{ll}
\sum_{s=n}^{\ell} \nu_{2\ell-2s,s} \theta_{n,2\ell-s,s}-\frac{1}{4}\nu_{0,\ell}\vartheta_{2n,\ell} & \mathrm{if\ } t = 2\ell \mathrm{ \ even}\\
\sum_{s=n}^{\ell} \nu_{2\ell+1-2s,s} \theta_{n,2\ell+1-s,s} & \mathrm{if\ } t = 2\ell+1 \mathrm{ \ odd}
\end{array}\right.
\]
with $\nu_{k,s} = (-1)^k \frac{\Gamma\left(k+2s+\frac{m}{2}\right)}{\Gamma(k+2s+1)\Gamma\left(\frac{m}{2}\right)}$.
\end{proposition}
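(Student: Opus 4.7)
The plan is to carry $\tilde{R}\circ\cR^{H}_{\uta}$ term-by-term through the kernel $L_{\uta}$, which splits into (\ref{kern sum 1}) and (\ref{kern sum 2}). For (\ref{kern sum 1}) I will apply Corollary \ref{cor} with $\ux=\uz,\uy=e^{-i\theta}\uom$ (noting $|\uu|^{2j}=(-1)^{j}\uu^{2j}$, so that $|\ux|^{2j}|\uy|^{2j}=\ux^{2j}\uy^{2j}$) to obtain
\[
\tilde{R}[a^{k+s}b^{s}]=\sum_{j=0}^{s}\theta_{j,k+s,s}\,\uz^{2j}\,K_{m,k+2s-2j}(\uz,e^{-i\theta}\uom)(e^{-i\theta}\uom)^{2j},
\]
and then Remark \ref{rem harmon to mono zonal} to write $K_{m,r}(\uz,e^{-i\theta}\uom)=\cC_{m,r}(\uz,e^{-i\theta}\uom)-\uz\,\cC_{m,r-1}(\uz,e^{-i\theta}\uom)\,e^{-i\theta}\uom$. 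For (\ref{kern sum 2}), Proposition \ref{prop coeffi vartheta} directly delivers $\tilde{R}[-\tfrac{1}{4}\utd\uta\,a^{s}b^{s}]=-\tfrac{1}{4}\sum_{j=0}^{2s}\vartheta_{j,s}\,\uz^{j}\,\cC_{m,2s-j}(\uz,e^{-i\theta}\uom)(e^{-i\theta}\uom)^{j}$. Plugging the resulting $\tilde{R}[L_{\uta}]$ into the outer Lie-sphere integral against $(e^{i\theta}\uom)^{2n}M_{t-2n}(e^{i\theta}\uom)$ and invoking the reproducing property of the zonal monogenics over the Lie sphere (Remark \ref{remark::reproducing properties}) is then the natural next step.

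Degree bookkeeping fixes the surviving summands. For every ``even-$\uom$-exponent'' piece $\uz^{2j}\cC_{m,r}(\uz,e^{-i\theta}\uom)(e^{-i\theta}\uom)^{2j}$, the homogeneity of $\cC_{m,r}$ and $M_{t-2n}$ together with $\uom^{2}=-1$ produces a $\theta$-frequency $e^{i(t-k-2s)\theta}$ for the first sum (resp.\ $e^{i(t-2s)\theta}$ for the second). The $\cC$-reproducing over $\mS^{m-1}$ forces $r=t-2n$, which automatically makes the frequency even, so $\int_{0}^{\pi}e^{iq\theta}d\theta$ kills every nonzero $q$. This selects $k+2s=t,\ j=n$ in (\ref{kern sum 1}) and $s=\ell,\ j=2n$ in (\ref{kern sum 2})---the latter non-empty only when $t=2\ell$ is even. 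The bound $0\le j\le s$ from Corollary \ref{cor} gives the summation range $s\in[n,\lfloor t/2\rfloor]$, and evaluating the surviving $\theta$- and $\uom$-integrals produces the coefficients $\nu_{t-2s,s}\theta_{n,t-s,s}$ and $-\tfrac{1}{4}\nu_{0,\ell}\vartheta_{2n,\ell}$ appearing in the statement.

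The main obstacle is to discard the ``odd-$\uom$-exponent'' pieces, namely the $-\uz^{2j+1}\cC_{m,r-1}(\uz,e^{-i\theta}\uom)(e^{-i\theta}\uom)^{2j+1}$ terms coming from the $K\to\cC$ conversion and the $j=2i+1$ terms in (\ref{kern sum 2}). In both cases the inner $\uom$-integral reduces to
\[
G_{r,l}(\uz):=\int_{\mS^{m-1}}\cC_{m,r}(\uz,\uom)\,\uom\,M_{l}(\uom)\,dS(\uom),
\]
with $l=t-2n$, and the task is to prove $G_{r,l}\equiv 0$. My plan is to multiply on the left by $\uz$ and invoke Remark \ref{rem harmon to mono zonal}, obtaining $\uz\,G_{r,l}(\uz)=\int_{\mS^{m-1}}[\cC_{m,r+1}(\uz,\uom)-K_{m,r+1}(\uz,\uom)]M_{l}(\uom)\,dS(\uom)$; because $M_{l}$ is simultaneously monogenic and harmonic, the reproducing identities (\ref{eq::repr harm}) and (\ref{eq::repr mono}) agree on it, so the right-hand side vanishes. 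Since the 1-vector $\uz$ has inverse $-\uz/|\uz|^{2}$ on the Zariski-open locus $\{|\uz|^{2}\ne 0\}$ and $G_{r,l}$ is polynomial in $\uz$, cancellation forces $G_{r,l}\equiv 0$. Assembling the surviving contributions and re-indexing $k=t-2s$ then completes the identification of $\rho_{t,2n}$ exactly as claimed, with the two cases split by the parity of $t$.
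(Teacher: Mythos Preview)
Your approach is essentially the same as the paper's: split $L_{\uta}$ into (\ref{kern sum 1}) and (\ref{kern sum 2}), apply $\tilde R$ termwise via Corollary~\ref{cor} and Proposition~\ref{prop coeffi vartheta}, convert $K_{m,\cdot}$ to $\cC_{m,\cdot}$ using Remark~\ref{rem harmon to mono zonal}, and then let the Lie-sphere reproducing property and the $\theta$-integral select the surviving indices. The one place you diverge is the justification that the odd-$\uom$-exponent integrals $G_{r,l}(\uz)=\int_{\mS^{m-1}}\cC_{m,r}(\uz,\uom)\,\uom\,M_l(\uom)\,dS(\uom)$ vanish: the paper dispatches this in one line by ``reproducing properties'' (implicitly, $\ux M_l(\ux)\in\ux\cM_l$ has no $\cM_r$-component for any $r$, so zonal monogenics kill it), whereas you multiply by $\uz$, use $\uz\,\cC_{m,r}(\uz,\uom)\,\uom=\cC_{m,r+1}-K_{m,r+1}$, and note that $\cC_{m,r+1}$ and $K_{m,r+1}$ agree on the monogenic (hence harmonic) $M_l$. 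Both arguments are correct; yours is a pleasant, self-contained alternative that avoids appealing to the Fischer orthogonality directly.
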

\begin{proof}
We have
\begin{align}
\tilde{R}[\cR^H_{\uta}[\uz^{2n} M_{t-2n}(\uz)] =& \tilde{R}\left[\int_{0}^\pi \int_{\mS^{m-1}} L_{\uta}(\uz,e^{i\theta}\uom)(e^{i\theta}\uom)^{2n} M_{t-2n}(e^{i\theta}\uom)dS(\uom)d\theta\right]\nonumber\\
=& \tilde{R}\left[\int_{0}^\pi \int_{\mS^{m-1}}\frac{1}{\pi A_m}\sum_{s=0}^{\infty}\sum_{k=0}^{\infty} \nu_{k,s} a^{k+s}b^{s} (e^{i\theta}\uom)^{2n} M_{t-2n}(e^{i\theta}\uom)dS(\uom)d\theta\right.\nonumber\\
&- \left.\frac{1}{\pi A_m}\frac{\utd\uta}{4}\sum_{s=0}^\infty \nu_{0,s}a^s b^s (e^{i\theta}\uom)^{2n} M_{t-2n}(e^{i\theta}\uom)dS(\uom)d\theta \right]\nonumber\\
=&\int_{0}^\pi \int_{\mS^{m-1}}\frac{1}{\pi A_m}\sum_{s=0}^{\infty}\sum_{k=0}^{\infty} \nu_{k,s}  \tilde{R}\left[a^{k+s}b^{s}\right] (e^{i\theta}\uom)^{2n} M_{t-2n}(e^{i\theta}\uom)dS(\uom)d\theta\label{integ Lie sphere 1}\\
&- \left.\frac{1}{\pi A_m}\sum_{s=0}^\infty \nu_{0,s}\tilde{R}\left[a^{s}b^{s}\frac{\utd\uta}{4}\right] (e^{i\theta}\uom)^{2n} M_{t-2n}(e^{i\theta}\uom)dS(\uom)d\theta \right]\label{integ Lie sphere 2}
\end{align}
where $a=\langle\uz,\uta\rangle\langle e^{-i\theta}\uom,\utd\rangle$,  $b=\langle\uz,\utd\rangle\langle e^{-i\theta}\uom,\uta\rangle$. Thus upon applying Corollary \ref{cor} and Proposition \ref{unicity harmonic}, we get

\begin{align*}
\tilde{R}\left[a^{k+s}b^{s}\right] = \sum_{n'=0}^{s} &\theta_{n', k+s,s} \uz^{2n'} \left[\cC_{m,2s+k-2n'}(\uz,e^{-i\theta}\uom)\right.\\
& \left.- \uz \cC_{m,2s+k-2n'-1}(\uz,e^{-i\theta}\uom)e^{-i\theta}\uom\right] \left(e^{-i\theta}\uom\right)^{2n'}.
\end{align*}
Hence the terms of (\ref{integ Lie sphere 1}) become

\begin{align}
&\int_{0}^\pi \int_{\mS^{m-1}} \uz^{2n'} \cC_{m,2s+k-2n'}(\uz,e^{-i\theta}\uom)\left(e^{-i\theta}\uom\right)^{2n'} (e^{i\theta}\uom)^{2n} M_{t-2n}(e^{i\theta}\uom) dS(\uom)d\theta\nonumber\\
&- \int_{0}^\pi \int_{\mS^{m-1}} \uz^{2n'+1} \cC_{m,2s+k-2n'-1}(\uz,e^{-i\theta}\uom)\left(e^{-i\theta}\uom\right)^{2n'+1} \left(e^{i\theta}\uom\right)^{2n}M_{t-2n}(e^{i\theta}\uom) dS(\uom)d\theta\nonumber\\
=&(-1)^{n'+n}\uz^{2n'}\int_{0}^\pi e^{i\theta(t-2s-k)}d\theta\int_{\mS^{m-1}}  \cC_{m,2s+k-2n'}(\uz,\uom) M_{t-2n}(\uom) dS(\uom) \label{int repr prop 1} \\
&-  (-1)^{n'+n}\uz^{2n'+1}\int_{0}^\pi e^{i\theta(t-2s-k)}d\theta\int_{\mS^{m-1}} \cC_{m,2s+k-2n'-1}(\uz,\uom)\uom M_{t-2n}(\uom) dS(\uom).\label{int repr prop 2}
\end{align}
If we now use the reproducing properties, (\ref{int repr prop 2}) will vanish and the only situation in which (\ref{int repr prop 1}) will not vanish is when $2s+k-2n' = t-2n$. But now 

\begin{align*}
\int_{0}^\pi e^{i\theta(t-2s-k)}d\theta &= \int_{0}^\pi e^{i\theta(2n-2n')}d\theta\\
&= \left\{\begin{array}{ll}
0 & \mbox{if $n\neq n'$,}\\
\pi & \mbox{if $n = n'$.}
\end{array}\right. 
\end{align*}
This implies that the only nontrivial case will be when $n'=n$ and $2s+k=t$, resulting in:

\begin{align*}
\int_{0}^\pi \int_{\mS^{m-1}}\frac{1}{\pi A_m}&\sum_{s=0}^{\infty}\sum_{k=0}^{\infty} \nu_{k,s}  \tilde{R}\left[a^{k+s}b^{s}\right] (e^{i\theta}\uom)^{2n} M_{t-2n}(e^{i\theta}\uom)dS(\uom)d\theta\\
&= \uz^{2n} M_{t-2n}(\uz)\sum_{s=n}^{\left\lfloor\frac{t}{2}\right\rfloor} \nu_{t-2s,s} \theta_{n,t-s,s}.
\end{align*}
Now looking at (\ref{integ Lie sphere 2}), we can use Proposition \ref{prop coeffi vartheta} which yields

\begin{align*}
\tilde{R}\left[a^{s}b^{s}\frac{\utd\uta}{4}\right] = \frac{1}{4}\sum_{n'=0}^{2s} &\vartheta_{n',s} \uz^{n'} \cC_{m,2s-n'}(\uz,e^{-i\theta}\uom)\left(e^{-i\theta}\uom\right)^{n'} 
\end{align*}
In complete analogy to (\ref{integ Lie sphere 1}), we see that $n' = 2n$ and $2s = t$, i.e. (\ref{integ Lie sphere 2}) will only be non-zero if $t = 2\ell$ is even and in this case we have 
\begin{align*}
-\frac{1}{\pi A_m}&\left.\sum_{s=0}^\infty \nu_{0,s}\tilde{R}\left[a^{s}b^{s}\frac{\utd\uta}{4}\right] (e^{i\theta}\uom)^{2n} M_{t-2n}(e^{i\theta}\uom)dS(\uom)d\theta \right]\\
&= -\frac{1}{4}\nu_{0,\ell}\vartheta_{2n,\ell}\uz^{2n} M_{t-2n}(\uz).
\end{align*}

\end{proof}
\begin{proposition}\label{prop::reproducing odd case}
Let $M_{t-2n-1}\in \cM_{t-2n-1}(\mC^m) $, $t,n\in\mN$ and $t-2n-1\geq 0$. Then

\[
\tilde{R}[\cR^H_{\uta}[\uz^{2n+1} M_{t-2n-1}(\uz)] = \rho_{t,2n+1} \uz^{2n+1} M_{t-2n-1}(\uz)
\]
where

\[
\rho_{t,2n+1} = \left\{\begin{array}{ll}
\sum_{s=0}^{\ell} \nu_{2\ell-2s,s} \theta_{n,2\ell-s,s}-\frac{1}{4}\nu_{0,\ell}\vartheta_{2n,\ell} & \mbox{if $t = 2\ell$ even}\\
\sum_{s=0}^{\ell} \nu_{2\ell+1-2s,s} \theta_{n,2\ell+1-s,s} & \mbox{if $t = 2\ell+1$ odd}
\end{array}\right.
\]
with $\nu_{k,s} = (-1)^k \frac{\Gamma\left(k+2s+\frac{m}{2}\right)}{\Gamma(k+2s+1)\Gamma\left(\frac{m}{2}\right)}$.
\end{proposition}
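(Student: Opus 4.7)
The plan is to mirror the proof of Proposition \ref{prop::reproducing even case} almost verbatim, replacing the input $\uz^{2n}M_{t-2n}(\uz)$ by $\uz^{2n+1}M_{t-2n-1}(\uz)$ and tracking carefully how the extra factor of $\uz$ flips parities in the spherical integrand. First I would write $\tilde{R}[\cR^H_{\uta}[\uz^{2n+1}M_{t-2n-1}]](\uz)$ as a double integral over the Lie sphere against $\tilde{R}[L_{\uta}(\uz,e^{-i\theta}\uom)]$, splitting along the two pieces (\ref{kern sum 1}) and (\ref{kern sum 2}). For (\ref{kern sum 1}) I invoke Corollary \ref{cor} (with parameters $k+s$, $s$) together with Remark \ref{rem harmon to mono zonal} to rewrite
\[
\tilde{R}[a^{k+s}b^{s}] = \sum_{n'=0}^{s}\theta_{n',k+s,s}\,\uz^{2n'}\bigl[\cC_{m,k+2s-2n'}(\uz,e^{-i\theta}\uom) - \uz\,\cC_{m,k+2s-2n'-1}(\uz,e^{-i\theta}\uom)\,e^{-i\theta}\uom\bigr](e^{-i\theta}\uom)^{2n'}.
\]

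The decisive observation, and the point where the odd case diverges from the even one, is a parity swap in the spherical integrand. After multiplying by $(e^{i\theta}\uom)^{2n+1}M_{t-2n-1}(e^{i\theta}\uom)$ and using $\uom^2=-1$ on $\mS^{m-1}$, the $\cC_{m,k+2s-2n'}$-piece collapses to a multiple of $\uom M_{t-2n-1}(\uom)$, against which $\cC_{m,j}(\uz,\uom)$ always integrates to zero, because $\uom\cM_{t-2n-1}$ is $L^2(\mS^{m-1})$-orthogonal to every $\cM_j$ via the Fischer decomposition $\cH_j=\cM_j\oplus\uom\cM_{j-1}$. The $\uz\,\cC_{m,k+2s-2n'-1}\,\uom$-piece, by contrast, collapses to a multiple of the plain $M_{t-2n-1}(\uom)$, which the reproducing property (\ref{eq::repr mono}) selects precisely when $k+2s-2n'-1=t-2n-1$. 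Combined with the angular constraint $\int_0^\pi e^{i\theta(t-k-2s)}d\theta\neq 0$, this forces $k+2s=t$ and $n'=n$; summing over the surviving values of $s$ (with $k=t-2s$) yields the first sum in $\rho_{t,2n+1}$ for both parities of $t$, the apparent lower limit $s=0$ being harmless since $\theta_{n,\cdot,s}$ vanishes whenever $s<n$.

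For the $\utd\uta$-correction (\ref{kern sum 2}), I use Proposition \ref{prop coeffi vartheta} to express $\tilde{R}[a^{s}b^{s}\utd\uta]$ as a sum of $\uz^{n'}\cC_{m,2s-n'}(\uz,e^{-i\theta}\uom)(e^{-i\theta}\uom)^{n'}$-terms. Its angular phase is $e^{i\theta(t-2s)}$, so only the even case $t=2\ell$ contributes, with $s=\ell$. Repeating the parity analysis on $\uom^{n'+2n+1}$, the only surviving index is now the odd one $n'=2n+1$, so that $\cC_{m,2\ell-2n-1}$ reproduces $M_{2\ell-2n-1}$. The relation $\vartheta_{2n+1,\ell}=-\vartheta_{2n,\ell}$ from Proposition \ref{prop coeffi vartheta} then converts this contribution into exactly $-\tfrac{1}{4}\nu_{0,\ell}\vartheta_{2n,\ell}\uz^{2n+1}M_{2\ell-2n-1}(\uz)$, matching the correction stated in the proposition, and assembling everything gives $\rho_{t,2n+1}$.

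The main obstacle will be the parity bookkeeping: one must combine $\uom^2=-1$ on $\mS^{m-1}$ with the $L^2(\mS^{m-1})$-orthogonality $\uom\cM_{k-1}\perp\cM_k$ to see that it is now the second summand of $K_{m,j}=\cC_{m,j}-\uz\cC_{m,j-1}\uom$, rather than the first, which survives in (\ref{kern sum 1}), and that the odd index $n'=2n+1$, rather than the even index $n'=2n$, is selected in (\ref{kern sum 2}). Once this swap is identified, every remaining step runs in exact parallel with the even case.
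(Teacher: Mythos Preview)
Your proposal is correct and follows exactly the approach the paper takes: the paper's own proof is the single sentence ``The proof is similar to the proof of Proposition \ref{prop::reproducing even case},'' and you have carried out precisely that similarity argument, correctly identifying the parity swap (now the $\uz\,\cC_{m,j-1}\,\uom$-summand of $K_{m,j}$ survives in (\ref{kern sum 1}), and the odd index $n'=2n+1$ is selected in (\ref{kern sum 2})) and the sign cancellation via $\vartheta_{2n+1,\ell}=-\vartheta_{2n,\ell}$. Your remark that the lower summation limit $s=0$ versus $s=n$ is immaterial because $\theta_{n,k,\ell}$ vanishes for $\min\{k,\ell\}<n$ is also correct and explains the cosmetic discrepancy between the stated formulas for $\rho_{t,2n}$ and $\rho_{t,2n+1}$.
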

\begin{proof}
The proof is similar to the proof of Proposition \ref{prop::reproducing even case}.
\end{proof}
The last step in the inversion is to write the coefficients $\rho_{t,2n}$ and $\rho_{t,2n+1}$ as operators. We can do this using the Euler operator $\mE_{\uz}$, and the Gamma operator $\Gamma_{\uz}$, using (\ref{eq::Euler,Gamma}) on page \pageref{eq::Euler,Gamma}. Thus we can write $\rho_{t,2n} = \rho_{\mathbb{E}_{\uz},\Gamma_{\uz} + \mE_{\uz}}$ and $\rho_{t,2n+1} = \rho_{\mathbb{E}_{\uz},-\Gamma_{\uz} + \mE_{\uz}+m-1}$ and hence the inversion of the polarized Hua-Radon transform is given by:

\begin{align*}
\uz^{2n} M_{t-2n}(\uz) &= \rho_{\mE_{\uz},\Gamma_{\uz} + \mE_{\uz}}^{-1} \tilde{R}\left[\cR_{\uta}^{H}[\uz_1^{2n} M_{t-2n}(\uz_1)]\right](\uz)\\
\uz^{2n+1} M_{t-2n-1}(\uz) &= \rho_{\mE_{\uz},-\Gamma_{\uz} + \mE_{\uz}+m-1}^{-1} \tilde{R}\left[\cR_{\uta}^{H}[\uz_1^{2n+1} M_{t-2n-1}(\uz_1)]\right](\uz)
\end{align*}
Analogously to the Hua-Radon transform, we can use (\ref{eq::Almansi}), so that we can invert any holomorphic function.

\begin{theorem}\label{inversion polarized}
Let $f(\uz) = \sum_{k=0}^{\infty} M_k(\uz) + \uz\sum_{\ell = 0}^{\infty} N_\ell(\uz) \in\mathcal{OL}^2(LB(0,1))$, with $M_k \in\cM_k(\mC^m)$, $N_{\ell}\in\cM_{\ell}(\mC^m)$, then
\[
f(\uz) = \rho_{\mathbb{E}_{\uz},\Gamma_{\uz} + \mE_{\uz}}^{-1}\tilde{R}\left[\mathcal{R}_{\uta}\left[\sum_{k=0}^{\infty} M_k(\uz')\right]\right](\uz) + \rho_{\mathbb{E}_{\uz},\mE_{\uz}-\Gamma_{\uz}+m-1}^{-1}\tilde{R}\left[\mathcal{R}_{\uta}\left[\uz'\sum_{\ell = 0}^{\infty} N_\ell(\uz')\right]\right] (\uz).
\]
\end{theorem}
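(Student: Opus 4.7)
The plan is to assemble the inversion formula by decomposing $f$ via the Almansi decomposition (\ref{eq::Almansi}) and then applying the termwise reproducing identities from Propositions \ref{prop::reproducing even case} and \ref{prop::reproducing odd case}. Writing $f(\uz) = \sum_{k=0}^{\infty} M_k(\uz) + \uz\sum_{\ell=0}^{\infty} N_\ell(\uz)$, I would first observe that $M_k(\uz)=\uz^{2\cdot 0}M_k(\uz)$ fits the even-power template of Proposition \ref{prop::reproducing even case} (with $n=0$, $t=k$) while $\uz N_\ell(\uz)=\uz^{2\cdot 0+1}N_\ell(\uz)$ fits the odd-power template of Proposition \ref{prop::reproducing odd case} (with $n=0$, $t=\ell+1$). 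By the linearity of both $\cR_{\uta}^H$ and $\tilde{R}$, applying $\tilde{R}\circ\cR_{\uta}^H$ termwise produces $\sum_k \rho_{k,0}M_k(\uz) + \sum_\ell \rho_{\ell+1,1}\uz N_\ell(\uz)$, where each $\rho$ is the scalar constant computed in the relevant proposition.

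Next I would upgrade the scalar multipliers to differential operators. Using (\ref{eq::Euler,Gamma}), the monogenic $M_k(\uz)$ is an eigenfunction of $\mE_{\uz}$ with eigenvalue $k$ and of the combination $(\Gamma_{\uz}+\mE_{\uz})/2$ (or the appropriate variant) with eigenvalue determined by $n$, so that the scalar $\rho_{t,2n}$ from Proposition \ref{prop::reproducing even case} can be recognized as the value of the operator $\rho_{\mE_{\uz},\Gamma_{\uz}+\mE_{\uz}}$ on the eigenfunction $\uz^{2n}M_{t-2n}(\uz)$; similarly $\rho_{t,2n+1}$ corresponds to $\rho_{\mE_{\uz},-\Gamma_{\uz}+\mE_{\uz}+m-1}$ on $\uz^{2n+1}M_{t-2n-1}(\uz)$. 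Since each Almansi summand of $f$ is a joint eigenfunction of $\mE_{\uz}$ and $\Gamma_{\uz}$, these operators act diagonally on the Almansi series, so termwise inversion is justified and yields
\[
M_k(\uz) = \rho_{\mE_{\uz},\Gamma_{\uz}+\mE_{\uz}}^{-1}\tilde{R}\bigl[\cR_{\uta}^H[M_k]\bigr](\uz),\qquad \uz N_\ell(\uz) = \rho_{\mE_{\uz},\mE_{\uz}-\Gamma_{\uz}+m-1}^{-1}\tilde{R}\bigl[\cR_{\uta}^H[\uz' N_\ell(\uz')]\bigr](\uz).
\]

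Finally, I would sum the two families separately, since the operators $\rho_{\mE_{\uz},\Gamma_{\uz}+\mE_{\uz}}^{-1}$ and $\rho_{\mE_{\uz},\mE_{\uz}-\Gamma_{\uz}+m-1}^{-1}$ preserve the respective parity classes and commute with the convergent $\mathcal{OL}^2$-series. Interchanging them with the termwise summation (justified by the $\mathcal{OL}^2(LB(0,1))$ convergence of the Almansi decomposition and the boundedness of each Hua-Radon projection and its dual) yields the claimed decomposition.

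The main obstacle I anticipate is the operator-theoretic upgrade: verifying that $\rho_{\mE_{\uz},\Gamma_{\uz}+\mE_{\uz}}$ really does coincide with the scalar $\rho_{t,2n}$ on every homogeneous monogenic building block requires checking that every factor and pole of the hypergeometric-type expression for $\rho_{t,2n}$ depends on $t$ and $n$ only through the two eigenvalues $t=\mE_{\uz}$ and $(\Gamma_{\uz}+\mE_{\uz})/2 = n$, and similarly in the odd case, and then confirming invertibility (nonvanishing of $\rho_{t,n}$) on each homogeneous component so that the formal inverse is a bona fide operator on $\mathcal{OL}^2(LB(0,1))$. Once this spectral interpretation is in place, the rest of the proof is a direct assembly of the pieces already proved.
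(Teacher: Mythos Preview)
Your proposal is correct and follows essentially the same route as the paper: the paper's argument for Theorem \ref{inversion polarized} consists precisely of invoking Propositions \ref{prop::reproducing even case} and \ref{prop::reproducing odd case}, rewriting the scalars $\rho_{t,2n}$ and $\rho_{t,2n+1}$ as the operators $\rho_{\mE_{\uz},\Gamma_{\uz}+\mE_{\uz}}$ and $\rho_{\mE_{\uz},\mE_{\uz}-\Gamma_{\uz}+m-1}$ via the eigenvalue relations (\ref{eq::Euler,Gamma}), and then summing over the Almansi decomposition (\ref{eq::Almansi}). Your additional caveats about invertibility of the $\rho$-operators and the interchange of summation with the operators are points the paper does not address explicitly either, so you are in fact slightly more careful than the original.
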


\section{Conclusions}
\setcounter{equation}{0}
In this paper we have studied the Hua-Radon and polarized Hua-Radon transform and their inversions. We have proven the unicity of the zonal spherical harmonics with respect to certain properties concerning their homogenicity, harmonicity and spin-invariance. This has lead to an inversion formula for the Hua-Radon transform in Theorem \ref{inversion Hua-Radon} and the polarized Hua-Radon transform in Theorem \ref{inversion polarized} using the techniques shown in \cite{Szego}.


\end{document}